\def\input@path{{macros/}}
\def\ps@pprintTitle{%
     \let\@oddhead\@empty
     \let\@evenhead\@empty
     \def\@oddfoot{\footnotesize\itshape\hfill\today}%
     \let\@evenfoot\@oddfoot}
\newcommand{\SCAL}[2]{\left<#1,#2\right>}
\newcommand{\KSCAL}[2]{\left[#1,#2\right]}
\newcommand{\normb}[1]{\left|\left|\left| #1 \right|\right|\right| }
\newcommand{\ITER}[1]{{(#1)}}
\newcommand{\vv}{``}
\newcommand{\cho}{Cholesky}
\newcommand{\cheb}{Chebyshev}
\newcommand{\DEF}{\mathrel{\mathop:}=}
\newcommand{\assign}{\leftarrow}
\newtheorem{theorem}{Theorem}[section]
\newtheorem{lemma}[theorem]{Lemma}
\newtheorem{corollary}[theorem]{Corollary}
\newtheorem{assumption}[theorem]{Assumption}
\newtheorem{remark}[theorem]{Remark}
\newtheorem{definition}[theorem]{Definition}
\newenvironment{proof}
{\par{\it Proof}. \ignorespaces}
{\qed\bigskip\newline}
\newcommand{\imag}{{\color{red}\bm{i}}}
\begin{document}

\begin{frontmatter}

\markboth{E.Bertolazzi, M.Frego}{Preconditioning complex symmetric linear systems}

\title{Preconditioning complex symmetric linear systems}

\author[EB]{Enrico Bertolazzi}
\ead{enrico.bertolazzi@unitn.it}

\author[EB]{Marco Frego}
\ead{m.fregox@gmail.com}

\address[EB]{Department of Industrial Engineering -- University of Trento, Italy}

\begin{abstract}
  A new polynomial preconditioner for symmetric complex linear systems based on Hermitian and skew-Hermitian
  splitting (HSS) for complex symmetric linear systems is herein presented.
  It applies to Conjugate Orthogonal Conjugate Gradient (COCG) or 
  Conjugate Orthogonal Conjugate Residual (COCR) iterative solvers and 
  does not require any estimation of the spectrum
  of the coefficient matrix. An upper bound of the condition number of the preconditioned linear system 
  is provided.
  Moreover, to reduce the computational cost, an inexact variant 
  based on incomplete \cho{} decomposition or orthogonal polynomials is proposed.
  Numerical results show that the present preconditioner and its 
  inexact variant are efficient and robust solvers for this class of 
  linear systems. A stability analysis of the method completes the description of the preconditioner.
\end{abstract}

\begin{keyword}
  Complex Symmetric Linear System \sep
  HSS preconditioner \sep
  Orthogonal Polynomials
\end{keyword}

\end{frontmatter}

\section{Introduction}
Focus of this paper is the solution of the complex linear system given by
$\bm{A}\bm{x} = \bm{b}$, where  the symmetric complex matrix $\bm{A}$
has the property that can be written as
$\bm{A}=\bm{B}+\imag\bm{C}$ with $\bm{B}$, $\bm{C}$ two real symmetric semi-positive definite 
 matrices (semi-SPD) and $\bm{B}+\bm{C}$ a symmetric positive definite (SPD) matrix. 
This kind of linear systems arises, for example, in the discretization of 
problems in computational electrodynamics~\cite{van:2001} or
time–dependent Schr\"odinger equations,
or in conductivity problems~\cite{Codecasa:2009,Pirani:2008}.

If $\bm{A}$ is Hermitian, a straight-forward extension of 
the Conjugate Gradients (CG) algorithm can be used~\cite{Vorst:1990}.
Unfortunately, the CG method can not be directly employed
when $\bm{A}$ is only complex symmetric, thus, some specialized iterative 
methods must be adopted. An effective one is the HSS with its variants (MHSS), 
which need, at each iteration, the solution of two real linear systems.  
Other standard procedures to solve this problem are given by numerical iterative methods based on Krylov spaces and designed for 
complex symmetric linear systems: COCG~\cite{van:2001}, COCR~\cite{Sogabe:2007},
CSYM~\cite{Bunse:1999}, CMRH~\cite{Sadok:1999}.
Some iterative methods for non SPD linear systems like BiCGSTAB~\cite{Vorst:1992},
BiCGSTAB($\ell$)~\cite{Sleijpen:1993,Sleijpen:1995}, 
GMRES~\cite{Saad:1986} and QMR~\cite{Freund:1991} can be adapted for complex symmetric 
matrices~\cite{Abe:2010,Jing:2009,Vorst:1990}.

Purpose of this paper is to develop a polynomial preconditioner to speed up the MHSS process and to propose a preconditioned version of COCG and COCR.\\

Methods based on Hermitian and Skew-Hermitian Splitting (HSS)~\cite{Bai:2010,Bai:2011,Bai:2002,Bai:2008} can be used as standalone  solvers or combined (as preconditioner) together with CG like algorithms.
The speed of convergence of CG like iterative schemes depends on the condition
number of matrix $\bm{A}$, thus, preconditioning is a standard way to 
improve convergence~\cite{Axelsson:1985,Benzi:2002}.
Incomplete LU is a standard and accepted way to precondition linear systems.
Despite its popularity, incomplete LU is potentially unstable, 
difficult to parallelize and lacks of algorithmic scalability.
Nevertheless, when incomplete LU is feasible and the preconditioned
linear system is well conditioned, the resulting algorithm is 
generally the most performing.\\
In this work we focus on large problems, where incomplete LU preconditioning is too costly or not feasible.
In this case, iterative methods like SSOR are used as preconditioners, but a
better performance is obtained using HSS iterative methods, which 
allow to reduce the condition number effectively.
However, HSS iterative methods need the solution of two SPD real
systems at each step. Standard preconditioned CG methods can be used at each iteration~\cite{Bai:2008}
which can again be preconditioned with an incomplete \cho{} factorization, although
for very large problems, the incomplete \cho{} factorization may be not convenient or
not feasible. As an alternative, in the present paper is proposed a polynomial preconditioner that allows to solve the linear system for large matrices. 
Polynomial preconditioners have not a good reputation as preconditioners
\cite{Axelsson:1985,Benzi:2002} and research on this subject was dropped out in the late 80's.
In fact, polynomial preconditioners based on \cheb{}
polynomials need accurate estimate of minimum and maximum eigenvalue, while
least squares polynomials were not computed using stable recurrences,
limiting the degree of available stable polynomials
\cite{Ashby:1991,Ashby:1992,Johnson:1983,Saad:1985,Fischer:2011}. However, in the last years, polynomial preconditioner 
went back to the top after the works of
Lu-Lai-Xu\cite{Lu:2011}. 
Notwithstanding anything contained above, here we propose as a preconditioner 
the use of a polynomial approximation of a modified HSS step. A specialization for \cheb{} and Jacobi orthogonal polynomials is discussed and 
the corresponding polynomial preconditioner is evaluated using a stable recurrence 
which permits to use very high degree polynomials.  \\

The paper has this structure. Section \ref{sec:mhss} describes the problem and gives a brief
summary of existing methods for resolution with the fundamental results and variants that lead
to the present method, in particular, the HSS is considered. Section \ref{sec:mhsspreco}
shows how to use one step of the MHSS method as a preconditioner and gives a bound on the
conditioning number of the MHSS iteration matrix. Section \ref{sec:iterative} explains
the iterative solution method with the strategy to adopt when 
\cho{} factorization is possibile or not. Section \ref{sec:scaling} presents a scale
transformation of the system in order to move the eigenvalues to the range $(0,1]$.
Section \ref{sec:preco} describes the polynomial preconditioner based first on least
squares and then in terms of orthogonal polynomials and furnishes a stable recurrence
for the computation of polynomial preconditioners for high degrees.
The specialization for \cheb{{} and Jacobi orthogonal polynomial is presented. 
Section \ref{sec:stability} studies the numerical stability of this process
and Section \ref{sec:conclusion} concludes the paper.


\subsection{The MHSS iterative solver}\label{sec:mhss}
The complex $N\times N$ linear system $\bm{A}\bm{x}=\bm{b}$, where $\bm{A}$ is complex symmetric, is solved via an iterative method based on a splitting algorithm (HSS).
The preconditioner requires to solve (each time it is applied) two real 
symmetric and positive definite (SPD) linear systems.

The HSS scheme can be summarized in this manner, suppose to decompose the vector $\bm{x}$
of the unknowns in real and imaginary part, $\bm{x}=\bm{y}+\imag\bm{z}$, and accordingly, the right hand side $\bm{b}= \bm{c}+\imag\bm{d}$, the system $\bm{A}\bm{x}=\bm{b}$ 
is then rewritten as:
\begin{EQ}\label{eq:sys:cplx}
  (\bm{B}+\imag\bm{C})(\bm{y}+\imag\bm{z}) = \bm{c}+\imag\bm{d},
\end{EQ}
so that, the two-steps of the Modified HSS method 
proposed in reference~\cite{Bai:2010} results in:
\begin{EQ}[lcl]\label{eq:HSS:iter}
  (\bm{V}+\bm{B})\bm{x}^\ITER{k+1/2} &=& 
  (\bm{V}-\imag\bm{C})\bm{x}^\ITER{k}+\bm{b},
  \\
  (\bm{W}+\bm{C})\bm{x}^\ITER{k+1} &=& 
  (\bm{W}+\imag\bm{B})\bm{x}^\ITER{k+1/2}-\imag\bm{b},
\end{EQ}
for suitable matrices $\bm{V}$ and $\bm{W}$.
The previous procedure can be rewritten as a single step of a 
splitting based scheme  $\bm{P}\bm{x}^{k+1}= \bm{Q}\bm{x}^k+\bm{b}$ by posing
\begin{EQ}[rcl]\label{eq:pq}
  \bm{P} &=& (\bm{V}+\bm{B})\left[\bm{W}-\imag\bm{V}\right]^{-1}(\bm{W}+\bm{C}),
  \\
  \bm{Q} &=& (\bm{V}+\bm{B})\left[\bm{W}-\imag\bm{V}\right]^{-1}
           (\bm{W}+\imag\bm{B})(\bm{V}+\bm{B})^{-1}(\bm{V}-\imag\bm{C}).
\end{EQ}
This iterative method converges if the iteration matrix $\bm{P}^{-1}\bm{Q}$, e.g, 
\begin{EQ}\label{eq:PQ:orig}
  \bm{P}^{-1}\bm{Q}=(\bm{W}+\bm{C})^{-1}(\bm{W}+\imag\bm{B})
                    (\bm{V}+\bm{B})^{-1}(\bm{V}-\imag\bm{C}),
\end{EQ}
has spectral radius strictly less than one. It is well known that the choice $\bm{V}=\bm{W}=\alpha\bm{I}$, for a given positive constant $\alpha$,  yields the standard HSS method~\cite{Bai:2003,Bai:2010,Bai:2011} for which an estimate of the spectral radius is given by
\begin{EQ}
   \varrho(\bm{P}^{-1}\bm{Q}) 
   \leq
   \max_{j=1,2,\ldots,N}\left\{
   \sqrt{\alpha^2+\lambda_j(\bm{B})^2}\Big/(\alpha+\lambda_j(\bm{B}))
   \right\},
\end{EQ}
where $\lambda_j(\bm{B})$ are the eigenvalues of the SPD matrix $\bm{B}$.
The optimal value for $\alpha$ can also be computed~\cite{Bai:2003,Bai:2010,Bai:2011} and is
\begin{EQ}
   \alpha_{opt}=
   \mathop{\arg\min}_{\alpha}\max_{j=1,2,\ldots,n}\left\{
   \sqrt{\alpha^2+\lambda_j(\bm{B})^2}\Big/(\alpha+\lambda_j(\bm{B}))
   \right\}
   =\sqrt{\lambda_{\min}(\bm{B})\lambda_{\max}(\bm{B})}.
\end{EQ}
From the previous formulas, it is clear that those computations rely on the knowledge 
(or estimate) of the minimum and maximum eigenvalue of matrix $\bm{B}$, which is, in general, a hard problem.\\
Another possible choice for $\bm{V}$ and $\bm{W}$ is $\bm{V}=\alpha\bm{B}$ and 
$\bm{W}=\beta\bm{B}$ which yields, when $\alpha=\beta$,
a variant of the MHSS method by~\cite{Bai:2003,Bai:2010,Bai:2011}.
In the next Lemma, an upper bound of the spectral radius 
of the iteration matrix is given.

\begin{lemma}\label{lem:1}
  Let $\bm{V}=\alpha\bm{B}$ and $\bm{W}=\beta\bm{B}$
  in \eqref{eq:PQ:orig} with $\bm{B}$ a SPD matrix and $\bm{C}$ a semi-SPD matrix,
  then the spectral radius of $\bm{P}^{-1}\bm{Q}$ satisfies
  the upper bound
\begin{EQ}\label{eq:rho:iter}
  \varrho(\bm{P}^{-1}\bm{Q}) \leq U(\alpha,\beta),\qquad  
  U(\alpha,\beta)=
   \dfrac{\sqrt{1+\beta^2}}{1+\alpha}
   \max\left\{
   1,\frac{\alpha}{\beta}
   \right\}
\end{EQ}
and the minimum value of the upper bound $U(\alpha,\beta)$ 
is attained when $\alpha=\beta=1$ where $U(1,1)=\sqrt{2}/2\approx 0.707$.
\end{lemma}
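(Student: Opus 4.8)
The substitution $\bm{V}=\alpha\bm{B}$, $\bm{W}=\beta\bm{B}$ turns \eqref{eq:PQ:orig} into
\begin{EQ}\label{eq:PQ:subst}
  \bm{P}^{-1}\bm{Q}=(\beta\bm{B}+\bm{C})^{-1}(\beta\bm{B}+\imag\bm{B})
                    \bigl((\alpha+1)\bm{B}\bigr)^{-1}(\alpha\bm{B}-\imag\bm{C}),
\end{EQ}
so the first factor $(\alpha+1)^{-1}$ pulls out as a scalar. The key reduction is to congruence-transform by $\bm{B}^{-1/2}$: since $\bm{B}$ is SPD it has a well-defined SPD square root, and $\varrho(\bm{P}^{-1}\bm{Q})=\varrho(\bm{B}^{-1/2}\bm{P}^{-1}\bm{Q}\,\bm{B}^{1/2})$. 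Writing $\widetilde{\bm{C}}=\bm{B}^{-1/2}\bm{C}\bm{B}^{-1/2}$, which is symmetric semi-SPD, the iteration matrix becomes similar to
\begin{EQ}\label{eq:PQ:similar}
  \frac{1}{\alpha+1}\,(\beta\bm{I}+\widetilde{\bm{C}})^{-1}(\beta+\imag)\bigl(\alpha\bm{I}-\imag\widetilde{\bm{C}}\bigr).
\end{EQ}
Now diagonalize $\widetilde{\bm{C}}=\bm{U}\bm{\Lambda}\bm{U}^{\top}$ with $\bm{\Lambda}=\mathrm{diag}(\mu_1,\dots,\mu_N)$, $\mu_j\ge 0$: all three surviving matrix factors are polynomials in $\widetilde{\bm{C}}$ and hence simultaneously diagonalized, so the spectrum of $\bm{P}^{-1}\bm{Q}$ is exactly
\begin{EQ}\label{eq:eigs}
  \left\{\;\frac{(\beta+\imag)(\alpha-\imag\mu_j)}{(\alpha+1)(\beta+\mu_j)}\;:\;j=1,\dots,N\right\}.
\end{EQ}

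**Estimating the modulus.** Taking absolute values, $\varrho(\bm{P}^{-1}\bm{Q})=\max_j g(\mu_j)$ where
\begin{EQ}\label{eq:gdef}
  g(\mu)=\frac{\sqrt{\beta^2+1}}{\alpha+1}\cdot\frac{\sqrt{\alpha^2+\mu^2}}{\beta+\mu},\qquad \mu\in[0,\infty).
\end{EQ}
So I would bound $\sup_{\mu\ge 0} h(\mu)$ with $h(\mu)=\sqrt{\alpha^2+\mu^2}/(\beta+\mu)$. Squaring, $h(\mu)^2=(\alpha^2+\mu^2)/(\beta+\mu)^2$; differentiating, the numerator of the derivative is proportional to $\beta\mu+\alpha^2-\mu\beta$\,---\,wait, more carefully, $\frac{d}{d\mu}\frac{\alpha^2+\mu^2}{(\beta+\mu)^2}$ has the sign of $\mu(\beta+\mu)-(\alpha^2+\mu^2)=\beta\mu-\alpha^2$. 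Hence $h$ decreases on $[0,\alpha^2/\beta]$ and increases afterwards, so the supremum on $[0,\infty)$ is attained at an endpoint: $h(0)=\alpha/\beta$ and $\lim_{\mu\to\infty}h(\mu)=1$. Therefore $\sup_{\mu\ge 0}h(\mu)=\max\{1,\alpha/\beta\}$, which gives exactly $\varrho(\bm{P}^{-1}\bm{Q})\le U(\alpha,\beta)$ as claimed in \eqref{eq:rho:iter}. (Strictly, the supremum at $+\infty$ is only attained in the limit, but for the actual finite spectrum each $\mu_j$ is finite, so $g(\mu_j)<U(\alpha,\beta)$ pointwise while $U(\alpha,\beta)$ remains the correct uniform upper bound; this is the usual HSS phenomenon and needs only a one-line remark.)

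**Minimizing the bound.** It remains to minimize $U(\alpha,\beta)=\frac{\sqrt{1+\beta^2}}{1+\alpha}\max\{1,\alpha/\beta\}$ over $\alpha,\beta>0$. Split into two regions. If $\alpha\le\beta$ then $U=\sqrt{1+\beta^2}/(1+\alpha)$, which for fixed $\beta$ strictly decreases in $\alpha$, so the infimum on this region is approached as $\alpha\to\beta$, i.e.\ $U\to\sqrt{1+\beta^2}/(1+\beta)$; this function of $\beta$ is minimized at $\beta=1$ with value $\sqrt{2}/2$. If $\alpha\ge\beta$ then $U=\frac{\alpha\sqrt{1+\beta^2}}{\beta(1+\alpha)}$; for fixed $\beta$, $\alpha/(1+\alpha)$ increases in $\alpha$, so one wants $\alpha$ as small as possible, again pushing $\alpha\to\beta$, and we are back to the previous one-variable problem. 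Checking $\alpha=\beta=1$ directly gives $U(1,1)=\sqrt{2}/2\approx 0.707$, and the analysis shows this is the global minimum.

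**Main obstacle.** The only real subtlety is the simultaneous diagonalization step: one must be careful that after the $\bm{B}^{-1/2}$-congruence every remaining factor genuinely is a function of the single matrix $\widetilde{\bm{C}}$ (this uses crucially that $\bm{V},\bm{W}$ are scalar multiples of $\bm{B}$, so the $\bm{B}$'s and $\bm{C}$'s do not get entangled), and that $\bm{C}$ being only semi-SPD\,---\,so $\mu_j$ may vanish\,---\,causes no division-by-zero in \eqref{eq:eigs} because $\beta+\mu_j\ge\beta>0$. Everything else is elementary single-variable calculus.
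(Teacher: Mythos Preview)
Your argument is correct and follows the same overall strategy as the paper: reduce the eigenvalues of $\bm{P}^{-1}\bm{Q}$ to the scalar function
\[
|\lambda(\mu)|=\frac{\sqrt{1+\beta^2}\,\sqrt{\alpha^2+\mu^2}}{(1+\alpha)(\beta+\mu)},\qquad \mu\ge 0,
\]
bound it uniformly in $\mu$, and then minimize the resulting $U(\alpha,\beta)$. The differences are in execution rather than substance. To obtain the eigenvalue formula, the paper works with the characteristic determinant $\det(\bm{P}^{-1}\bm{Q}-\lambda\bm{I})=0$ and manipulates it algebraically into the form $\det(\mu\bm{B}-\bm{C})=0$ for a generalized eigenvalue $\mu$, then solves for $\lambda(\mu)$; your similarity transform by $\bm{B}^{1/2}$ (note: this is a similarity, not a congruence, but the conclusion is unaffected) followed by diagonalization of $\widetilde{\bm{C}}$ reaches the same formula more directly and makes the dependence on the single pencil $(\bm{B},\bm{C})$ transparent. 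For the optimization of $U(\alpha,\beta)$, the paper passes to polar coordinates $\alpha=\ell\sin\theta$, $\beta=\ell\cos\theta$, minimizes in $\ell$ for fixed $\theta$, and then in $\theta$; your case split $\alpha\lessgtr\beta$ with monotonicity in $\alpha$ is shorter and equally rigorous, collapsing both regions to the one-variable problem $\min_{\beta>0}\sqrt{1+\beta^2}/(1+\beta)$. Either route yields $\alpha=\beta=1$ and $U(1,1)=\sqrt{2}/2$.
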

\begin{proof}
  Posing $\bm{V}=\alpha\bm{B}$ and $\bm{W}=\beta\bm{B}$ in~\eqref{eq:PQ:orig} yields
  \begin{EQ}
    \bm{P}^{-1}\bm{Q}
   =
    (\beta\bm{B}+\bm{C})^{-1}(\beta\bm{B}+\imag\bm{B})
    (\alpha\bm{B}+\bm{B})^{-1}(\alpha\bm{B}-\imag\bm{C})
   =
   \frac{\beta+\imag}{1+\alpha}(\beta\bm{B}+\bm{C})^{-1}
               (\alpha\bm{B}-\imag\bm{C}).
\end{EQ}
If $\lambda$ is an eigenvalue of matrix $\bm{P}^{-1}\bm{Q}$, 
it satisfies
\begin{EQ}[rcl]
   0 &=& \det\left(\bm{P}^{-1}\bm{Q}-\lambda\bm{I}\right) \\
    & \Downarrow & \\
   0 &=& \det\left(\frac{\beta+\imag}{1+\alpha}\left(\alpha\bm{B}-\imag\bm{C}\right)-\lambda(\beta\bm{B}+\bm{C})\right) \\
    & \Downarrow & \\
   0 &=& \det\left(\left(\alpha\frac{\beta+\imag}{1+\alpha}-\lambda\beta\right)\bm{B}
   -\left(\imag\frac{\beta+\imag}{1+\alpha}+\lambda\right)\bm{C}\right) \\
    & \Downarrow & \\
   0 &=& \det\left(\frac{\alpha(\imag-\lambda\beta)+\beta(\alpha-\lambda)}
                        {\lambda(\alpha+1)-1+\beta\imag}\bm{B}-\bm{C}\right).
\end{EQ}
Thus, $\mu$ defined as
\begin{EQ}\label{eq:mu}
  \mu = \frac{\alpha(\imag-\lambda\beta)+\beta(\alpha-\lambda)}
                        {\lambda(\alpha+1)-1+\beta\imag},
\end{EQ}
is a generalized eigenvalue, i.e., it satisfies $\det(\mu\bm{B}-\bm{C})=0$ and
it is well known that $\mu$ must be non negative.
Computing $\lambda$ from~\eqref{eq:mu}, the function $\lambda(\mu)$ is found to be:
\begin{EQ}
   \lambda(\mu)\DEF \frac{(\alpha+\imag\mu)(\beta+\imag)}{(1+\alpha)(\beta+\mu)},
\end{EQ}
which allows to evaluate an upper bound $U(\alpha,\beta)$ 
of the spectral radius of $\bm{P}^{-1}\bm{Q}$:
\begin{EQ}
   \varrho(\bm{P}^{-1}\bm{Q}) \leq 
   \sup_{\mu\geq 0}\abs{\lambda(\mu)} =
   \sup_{\mu\geq 0}\frac{\sqrt{\alpha^2+\mu^2}\sqrt{1+\beta^2}}{(1+\alpha)(\beta+\mu)}
   \leq 
   U(\alpha,\beta).
\end{EQ}
There are optimal values of $\alpha$ and $\beta$ that minimize $U(\alpha,\beta)$
for $\alpha\geq 0$ and $\beta\geq 0$. To find them, set $\alpha=\ell\sin\theta$ and $\beta=\ell\cos\theta$ 
with $\theta\in[0,\pi/2]$, then
\begin{EQ}
   U(\alpha,\beta) =
   \dfrac{\sqrt{1+\ell^2(\cos\theta)^2}}{1+\ell\sin\theta}
   \max\left\{
   1,\tan\theta
   \right\}.
\end{EQ}
If $\theta$ is fixed, the minimum of this last expression is for 
$\ell=\sin\theta/(\cos\theta)^2$ corresponding to
\begin{EQ}
   U(\alpha,\beta) =
   \cos\theta
   \max\left\{
   1,\tan\theta
   \right\}
    =
   \max\left\{
   \cos\theta,\sin\theta
   \right\}
   \qquad\textrm{for $\theta\in[0,\pi/2]$}.
\end{EQ}
The minimum of $U(\alpha,\beta)$ is attained for $\theta=\pi/4$, which corresponds
to $\alpha=\beta=1$. The computation of $U(1,1)$ is then just a computation.
\end{proof}
\medskip

Being $\alpha=\beta=1$ optimal, from now on it is assumed $\alpha=\beta=1$ 
and therefore $\bm{V}=\bm{W}=\bm{B}$.
Using these values, the two step method~\eqref{eq:HSS:iter}
is recast as the one step method:
\begin{EQ}\label{eq:PMHSS}
  \bm{x}^\ITER{k+1}= \bm{P}^{-1}(\bm{Q}\bm{x}^\ITER{k}+\bm{b})
                   =\bm{x}^\ITER{k}+\bm{P}^{-1}(\bm{b}-\bm{A}\bm{x}^\ITER{k}),
\end{EQ}
where the simplified expressions for $\bm{P}$ and $\bm{Q}$ are:
\begin{EQ}\label{eq:PQ}
  \bm{P} = (1+\imag)(\bm{B}+\bm{C}),
  \qquad
  \bm{Q} = \bm{C}+\imag\bm{B}.
\end{EQ}
Notice that $\bm{P}$ is well defined and non singular
provided that $\bm{B}+\bm{C}$ is not singular.
Thus the requests of Lemma~\ref{lem:1} are weakened when $\alpha=\beta=1$, resulting in the next corollary.
\begin{corollary}\label{cor:PQ}
  Let $\bm{B}$ and $\bm{C}$ be semi-SPD with $\bm{B}+\bm{C}$
  not singular, $\bm{P}$ and $\bm{Q}$ as defined in~\eqref{eq:PQ},
  then the spectral radius of $\bm{P}^{-1}\bm{Q}$ satisfies
  the upper bound
  $\varrho(\bm{P}^{-1}\bm{Q}) \leq \sqrt{2}/2$.
\end{corollary}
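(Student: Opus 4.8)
The plan is to reduce Corollary~\ref{cor:PQ} to the case $\alpha=\beta=1$ of Lemma~\ref{lem:1}, being careful that the hypotheses have been weakened: Lemma~\ref{lem:1} requires $\bm{B}$ to be strictly SPD, whereas here $\bm{B}$ is only semi-SPD (with $\bm{B}+\bm{C}$ nonsingular). So the first step is to observe that with $\alpha=\beta=1$ the expressions in \eqref{eq:PQ} hold and $\bm{P}=(1+\imag)(\bm{B}+\bm{C})$ is invertible exactly when $\bm{B}+\bm{C}$ is nonsingular, so $\bm{P}^{-1}\bm{Q}$ is well defined. The target bound $\sqrt{2}/2 = U(1,1)$ is precisely the optimal value from the Lemma.

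The cleanest route is a perturbation/continuity argument. For $\varepsilon>0$ set $\bm{B}_\varepsilon \DEF \bm{B}+\varepsilon\bm{I}$, which is strictly SPD, and keep $\bm{C}$ fixed (still semi-SPD, and $\bm{B}_\varepsilon+\bm{C}$ is nonsingular since it is the sum of an SPD and a semi-SPD matrix). Applying Lemma~\ref{lem:1} with $\alpha=\beta=1$ to the pair $(\bm{B}_\varepsilon,\bm{C})$ gives $\varrho\bigl((\bm{B}_\varepsilon+\bm{C})^{-1}(\bm{C}+\imag\bm{B}_\varepsilon)\bigr)\leq U(1,1)=\sqrt{2}/2$ for every $\varepsilon>0$. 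Since $\bm{B}+\bm{C}$ is nonsingular, $(\bm{B}_\varepsilon+\bm{C})^{-1}(\bm{C}+\imag\bm{B}_\varepsilon)\to \bm{P}^{-1}\bm{Q}/(1+\imag)\cdot(1+\imag)= (\bm{B}+\bm{C})^{-1}(\bm{C}+\imag\bm{B})$ entrywise as $\varepsilon\to0$, and the spectral radius is a continuous function of the matrix entries (the eigenvalues depend continuously on the coefficients of the characteristic polynomial). Passing to the limit yields $\varrho(\bm{P}^{-1}\bm{Q})\leq \sqrt{2}/2$.

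Alternatively, one can redo the eigenvalue computation directly: as in the proof of Lemma~\ref{lem:1}, any eigenvalue $\lambda$ of $\bm{P}^{-1}\bm{Q}=\tfrac{1+\imag}{2}(\bm{B}+\bm{C})^{-1}(\bm{C}+\imag\bm{B})$ corresponds (when $\bm{B}+\bm{C}$ is nonsingular, so that the pencil manipulations go through) to a generalized eigenvalue $\mu\geq0$ of the pair $(\bm{B},\bm{C})$ via $\lambda=\lambda(\mu)=\tfrac{(1+\imag\mu)(1+\imag)}{2(1+\mu)}$, for which $|\lambda(\mu)|=\tfrac{\sqrt{2}\sqrt{1+\mu^2}}{2(1+\mu)}\leq\sqrt{2}/2$ since $1+\mu^2\leq(1+\mu)^2$ for $\mu\geq0$. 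The only subtlety, and the main obstacle, is justifying the pencil step when $\bm{B}$ is merely semi-SPD: one must argue that the relevant determinant identities and the claim ``$\mu$ is a nonnegative generalized eigenvalue'' remain valid, or simply isolate the possible zero modes of $\bm{B}$ and handle them separately. The perturbation argument sidesteps this entirely, so I would present that as the proof and relegate the direct computation to a remark.

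\begin{proof}
For $\varepsilon>0$ let $\bm{B}_\varepsilon\DEF\bm{B}+\varepsilon\bm{I}$, which is SPD, and note $\bm{B}_\varepsilon+\bm{C}$ is nonsingular (sum of an SPD and a semi-SPD matrix). By Lemma~\ref{lem:1} with $\alpha=\beta=1$ applied to $(\bm{B}_\varepsilon,\bm{C})$,
\begin{EQ}
  \varrho\bigl((\bm{B}_\varepsilon+\bm{C})^{-1}(\bm{C}+\imag\bm{B}_\varepsilon)\bigr)\leq U(1,1)=\frac{\sqrt{2}}{2}
  \qquad\textrm{for all }\varepsilon>0.
\end{EQ}
Since $\bm{B}+\bm{C}$ is nonsingular, $(\bm{B}_\varepsilon+\bm{C})^{-1}(\bm{C}+\imag\bm{B}_\varepsilon)\to(\bm{B}+\bm{C})^{-1}(\bm{C}+\imag\bm{B})$ entrywise as $\varepsilon\to0^+$; the eigenvalues depend continuously on the matrix entries, hence so does the spectral radius, and passing to the limit gives $\varrho\bigl((\bm{B}+\bm{C})^{-1}(\bm{C}+\imag\bm{B})\bigr)\leq\sqrt{2}/2$. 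By~\eqref{eq:PQ}, $\bm{P}^{-1}\bm{Q}=\tfrac{1+\imag}{|1+\imag|^2}\cdot(1+\imag)(\bm{B}+\bm{C})^{-1}(\bm{C}+\imag\bm{B})$; more directly $\bm{P}^{-1}\bm{Q}=(1+\imag)^{-1}(\bm{B}+\bm{C})^{-1}(\bm{C}+\imag\bm{B})$ has the same spectral radius as $(\bm{B}+\bm{C})^{-1}(\bm{C}+\imag\bm{B})$ only up to the factor $|1+\imag|^{-1}=1/\sqrt{2}$; combining, $\varrho(\bm{P}^{-1}\bm{Q})\leq\sqrt{2}/2$.
\end{proof}
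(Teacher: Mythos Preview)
Your perturbation-and-continuity strategy is perfectly valid, and since the paper offers no explicit proof of the corollary (it is simply stated as a consequence of the observation that with $\alpha=\beta=1$ the expressions~\eqref{eq:PQ} only require $\bm{B}+\bm{C}$ nonsingular), your approach is a reasonable way to fill the gap. However, the execution contains a bookkeeping error that makes the written argument incorrect.

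The displayed inequality is wrong. Lemma~\ref{lem:1} bounds the spectral radius of the \emph{iteration matrix} $\bm{P}^{-1}\bm{Q}$ of~\eqref{eq:PQ:orig}, which for $\alpha=\beta=1$ simplifies (as the paper derives just before~\eqref{eq:PQ}) to $(1+\imag)^{-1}(\bm{B}_\varepsilon+\bm{C})^{-1}(\bm{C}+\imag\bm{B}_\varepsilon)$. It does \emph{not} bound $\varrho\bigl((\bm{B}_\varepsilon+\bm{C})^{-1}(\bm{C}+\imag\bm{B}_\varepsilon)\bigr)$ by $\sqrt{2}/2$; the correct bound for that matrix is $|1+\imag|\cdot\sqrt{2}/2=1$. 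Your last sentence then tries to reinsert the missing $(1+\imag)^{-1}$, but the algebra there is garbled (the first identity evaluates to $\bm{P}^{-1}\bm{Q}=\imag\,(\bm{B}+\bm{C})^{-1}(\bm{C}+\imag\bm{B})$, which is false), and from your own intermediate bound the conclusion would actually be $\varrho(\bm{P}^{-1}\bm{Q})\leq 1/2$, not $\sqrt{2}/2$.

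The fix is trivial: carry the scalar from the start. Lemma~\ref{lem:1} applied to $(\bm{B}_\varepsilon,\bm{C})$ with $\alpha=\beta=1$ gives
\[
  \varrho\bigl((1+\imag)^{-1}(\bm{B}_\varepsilon+\bm{C})^{-1}(\bm{C}+\imag\bm{B}_\varepsilon)\bigr)\leq\sqrt{2}/2,
\]
and the matrix on the left is exactly $\bm{P}_\varepsilon^{-1}\bm{Q}_\varepsilon$ with $\bm{P}_\varepsilon,\bm{Q}_\varepsilon$ given by~\eqref{eq:PQ} for $\bm{B}_\varepsilon$. Letting $\varepsilon\to0^+$ and using continuity of the spectral radius finishes the proof directly; no final conversion step is needed.
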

\begin{remark}\label{remfixediter}
The spectral radius of the iteration matrix is bounded independently
of its size, thus, once the tolerance is fixed, the maximum number of iterations is 
independent from the size of the problem.
\end{remark}
Iterative method~\eqref{eq:PMHSS}-\eqref{eq:PQ}
can be reorganized in the following algorithm:
\begin{center}
  \medskip
  \begin{minipage}{0.7\textwidth}
  \begin{algorithm}[H]
    $\bm{r}\assign\bm{b}$;\qquad
    $\bm{x}\assign\bm{0}$\;
    \While{$\norm{\bm{r}}>\varepsilon \norm{\bm{b}}$}{
      Solve $(\bm{B}+\bm{C})\bm{h}=\dfrac{1-\imag}{2}\bm{r}$;\qquad
      $\bm{x}\assign\bm{x}+\bm{h}$;\qquad
      $\bm{b}\assign\bm{b}-\bm{A}\bm{x}$\;
     }
     \caption{MHSS iterative solver for 
             $\bm{A}\bm{x}=(\bm{B}+\imag\bm{C})\bm{x}=\bm{b}$}\label{algo:1}
  \end{algorithm}
  \end{minipage}
  \medskip
\end{center}
\begin{remark}\label{rem:twosys}
The MHSS iterative solver of Algorithm~\ref{algo:1} 
needs at each iteration the resolution of two  real linear systems, respectively for the real and imaginary part, 
 whose coefficient matrices are SPD, namely $\bm{B}+\bm{C}$. 
For small matrices this can be efficiently done by using \cho{} decomposition.
For large and sparse matrices a preconditioned
Conjugate Gradient method is mandatory.
\end{remark}

Although Algorithm~\ref{algo:1} can be used to solve linear system~\eqref{eq:sys:cplx},
better performances are obtained using one or more steps of Algorithm~\ref{algo:1}
not to solve the linear system~\eqref{eq:sys:cplx}, but 
as a preconditioner for a faster Conjugate Gradient like iterative solver 
such as COCG or COCR.
The convergence rate estimation for these iterative
schemes for a linear system $\bm{M}\bm{x}=(\bm{B}+\bm{C})\bm{x}=\bm{b}$ 
depends on the condition number $\kappa_2=\norm{\bm{M}}_2\norm{\bm{M}^{-1}}_2$
where $\norm{\bm{M}}_2=\sqrt{\varrho(\bm{M}^T\bm{M})}$
is the classic spectral norm of a matrix.
The energy norm $\norm{\cdot}_{\bm{M}}$ induced
by the (real) SPD matrix $\bm{M}$
is used to obtain the well known estimate
\begin{EQ}
   \frac{\norm{\bm{x}^\ITER{k}-\bm{x}^\star}_{\bm{M}}}{\norm{\bm{x}^\ITER{0}-\bm{x}^\star}_{\bm{M}}}
   \leq 2\left(
   \dfrac{\sqrt{\kappa_2}-1}{\sqrt{\kappa_2}+1}
   \right)^k,
   \qquad
   \norm{\bm{v}}_{\bm{M}}=\sqrt{\bm{v}^T\bm{M}\bm{v}}
\end{EQ}
where $\bm{x}^\star$ is the solution of the linear system.
In general, Conjugate Gradient like iterative schemes perform efficiently 
if a good preconditioner makes the system well conditioned.

\section{Use of MHSS as preconditioner}\label{sec:mhsspreco}
In this section the effect of a fixed number of steps of Algorithm~\ref{algo:1},
used as a preconditioner for linear system~\eqref{eq:sys:cplx},
is analyzed in terms of the reduction of the condition number.
Performing $n$ steps of Algorithm~\ref{algo:1} with $\bm{x}^\ITER{0}=\bm{0}$
is equivalent to compute $\bm{x}^\ITER{n}=\bm{\mathcal{P}}_n^{-1}\bm{b}$ where
\begin{EQ}\label{eq:P:n:def}
   \bm{\mathcal{P}}^{-1}_n=
   \left(
     \bm{I}+
     \bm{P}^{-1}\bm{Q}+
     (\bm{P}^{-1}\bm{Q})^2+\cdots+
     (\bm{P}^{-1}\bm{Q})^{n-1}
   \right)
   \bm{P}^{-1}
\end{EQ}
Matrix $\bm{\mathcal{P}}_n$
can be thought as an approximation of matrix $\bm{A} = \bm{B}+\imag\bm{C}$.

Thus, it is interesting to obtain an estimate of the condition number of
the preconditioned matrix $\bm{\mathcal{P}}_n^{-1}\bm{A}$ in order to check the effect of 
MHSS when used as preconditioner.

For the estimation, we need to recall some classical results about
spectral radii and norms.
For any matrix $\bm{M}$ and any matrix norm,
Gelfand's Formula connects norm and spectral radius \cite{Gelfand:1941,Kozyakin:2009}:
\begin{EQ}\label{eq:Gelfand}
  \varrho(\bm{M})=\limsup_{k\to\infty} \norm{\bm{M}^k}^{1/k}.
\end{EQ}
Notice that when $\varrho(\bm{M})< 1$,  
for $k$ large enough, $\norm{\bm{M}^k}<1$.

\medskip
\begin{lemma}\label{lem:kappa:0}
Let $\bm{A}=\bm{P}-\bm{Q}$ so that $\bm{P}^{-1}\bm{Q}$ be such that $\varrho(\bm{P}^{-1}\bm{Q})<1$,
then for any $\varepsilon>0$ satisfying $\varrho(\bm{P}^{-1}\bm{Q})+\varepsilon<1$ 
there is an integer $n_\varepsilon>0$ such that:
\begin{EQ}\label{eq:Cnm}
  \kappa(\bm{\mathcal{P}}_n^{-1}\bm{A})
  \leq\dfrac{1+(\varrho(\bm{P}^{-1}\bm{Q})+\varepsilon)^n}{1-(\varrho(\bm{P}^{-1}\bm{Q})+\varepsilon)^n},
  \qquad
  n \geq n_\varepsilon
\end{EQ}
where $\kappa(\bm{M}) = \norm{\bm{M}}\norm{\bm{M}^{-1}}$ is the condition number with
respect to the norm $\norm{\cdot}$ and $\mathcal{P}_n$ is defined by~\eqref{eq:P:n:def}.
\end{lemma}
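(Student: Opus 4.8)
The plan is to first rewrite the preconditioned matrix in the transparent form $\bm{\mathcal{P}}_n^{-1}\bm{A}=\bm{I}-(\bm{P}^{-1}\bm{Q})^n$, and then to bound its norm and the norm of its inverse separately. Put $\bm{G}\DEF\bm{P}^{-1}\bm{Q}$. Since $\bm{A}=\bm{P}-\bm{Q}$ we have $\bm{P}^{-1}\bm{A}=\bm{I}-\bm{G}$, so that, from the definition~\eqref{eq:P:n:def} of $\bm{\mathcal{P}}_n^{-1}$,
\begin{EQ}
  \bm{\mathcal{P}}_n^{-1}\bm{A}
  =\left(\bm{I}+\bm{G}+\cdots+\bm{G}^{n-1}\right)\bm{P}^{-1}\bm{A}
  =\left(\bm{I}+\bm{G}+\cdots+\bm{G}^{n-1}\right)\left(\bm{I}-\bm{G}\right)
  =\bm{I}-\bm{G}^n,
\end{EQ}
the last equality being the telescoping of the finite geometric sum (all the $\bm{G}^k$ commute). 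Hence the statement reduces to an estimate of $\kappa(\bm{I}-\bm{G}^n)$.

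Next I would invoke Gelfand's formula~\eqref{eq:Gelfand}. Since $\varrho(\bm{G})<1$, for the prescribed $\varepsilon>0$ with $\varrho(\bm{G})+\varepsilon<1$ there is an integer $n_\varepsilon$ such that $\norm{\bm{G}^n}^{1/n}\leq\varrho(\bm{G})+\varepsilon$, that is,
\begin{EQ}
  \norm{\bm{G}^n}\leq(\varrho(\bm{G})+\varepsilon)^n<1,\qquad n\geq n_\varepsilon.
\end{EQ}
For the numerator the triangle inequality gives $\norm{\bm{I}-\bm{G}^n}\leq\norm{\bm{I}}+\norm{\bm{G}^n}\leq 1+(\varrho(\bm{G})+\varepsilon)^n$, where $\norm{\cdot}$ is a submultiplicative norm with $\norm{\bm{I}}=1$. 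For the denominator, $\norm{\bm{G}^n}<1$ makes $\bm{I}-\bm{G}^n$ invertible, and the Neumann series $(\bm{I}-\bm{G}^n)^{-1}=\sum_{k\geq 0}(\bm{G}^n)^k$ yields
\begin{EQ}
  \norm{(\bm{I}-\bm{G}^n)^{-1}}
  \leq\sum_{k\geq 0}\norm{\bm{G}^n}^k
  =\frac{1}{1-\norm{\bm{G}^n}}
  \leq\frac{1}{1-(\varrho(\bm{G})+\varepsilon)^n}.
\end{EQ}
Multiplying the two bounds produces~\eqref{eq:Cnm}.

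The argument is a short chain of standard facts, so I do not expect a genuine obstacle; the two points that deserve attention are the following. First, Gelfand's formula only supplies the inequality $\norm{\bm{G}^n}\leq(\varrho(\bm{G})+\varepsilon)^n$ for $n$ large enough, which is precisely why the conclusion is asserted for $n\geq n_\varepsilon$ and not for all $n$. Second, the constant $1$ occurring in both the numerator and the denominator of~\eqref{eq:Cnm} relies on $\norm{\cdot}$ being an operator norm (submultiplicative, with $\norm{\bm{I}}=1$): without this the $1$ in the numerator would have to be replaced by $\norm{\bm{I}}$, and the Neumann-series step requires submultiplicativity. Finally, it is worth noting that the right-hand side of~\eqref{eq:Cnm} is decreasing in $n$ and tends to $1$ as $n\to\infty$, which is the qualitative content of the lemma: a sufficient number of MHSS steps renders the preconditioned system arbitrarily well conditioned.
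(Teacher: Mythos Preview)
Your proof is correct and follows essentially the same route as the paper: rewrite $\bm{\mathcal{P}}_n^{-1}\bm{A}=\bm{I}-(\bm{P}^{-1}\bm{Q})^n$ via the telescoping geometric sum, invoke Gelfand's formula to get $\norm{(\bm{P}^{-1}\bm{Q})^n}\leq(\varrho(\bm{P}^{-1}\bm{Q})+\varepsilon)^n<1$ for $n\geq n_\varepsilon$, and then combine the triangle inequality with the Neumann-series bound $\norm{(\bm{I}-\bm{M})^{-1}}\leq 1/(1-\norm{\bm{M}})$. Your explicit remarks on the need for $\norm{\bm{I}}=1$ and submultiplicativity are a welcome clarification that the paper leaves implicit.
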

\begin{proof}
Observe that $\bm{P}^{-1}\bm{A}=\bm{P}^{-1}(\bm{P}-\bm{Q})=\bm{I}-\bm{P}^{-1}\bm{Q}$,
hence using~\eqref{eq:P:n:def} the preconditioned matrix becomes
$\bm{\mathcal{P}}_n^{-1}\bm{A}=\bm{I}-(\bm{P}^{-1}\bm{Q})^n$.
From Gelfand's Formula \eqref{eq:Gelfand} there exists $n_\varepsilon$
such that 
\begin{EQ}\label{eq:lem:3}
   \norm{(\bm{P}^{-1}\bm{Q})^n} \leq 
   (\varrho(\bm{P}^{-1}\bm{Q})+\varepsilon)^n < 1
   \qquad
   \textrm{for all $n\geq n_\varepsilon$}
\end{EQ}
and from \eqref{eq:lem:3}, by setting $\bm{M}=(\bm{P}^{-1}\bm{Q})^n$,
the convergent series (see \cite{householder:1965,stoer:2002}) gives a bound 
for the norm of the inverse
\begin{EQ}
   (\bm{I}-\bm{M})^{-1} 
   =
   \sum_{k=0}^\infty\bm{M}^k,
   \qquad
   \norm{(\bm{I}-\bm{M})^{-1}} 
   \leq
   \sum_{k=0}^\infty\norm{\bm{M}}^k
   =\dfrac{1}{1-\norm{\bm{M}}}
\end{EQ}
and
\begin{EQ}\label{eq:ineq:M}
     \kappa(\bm{\mathcal{P}}_n^{-1}\bm{A})
     =\kappa(\bm{I}-\bm{M})
     =\norm{\bm{I}-\bm{M}}\norm{(\bm{I}-\bm{M})^{-1}}
     \leq \dfrac{1+\norm{\bm{M}}}{1-\norm{\bm{M}}}.
\end{EQ}
The thesis follows trivially from~\eqref{eq:lem:3}.
\end{proof}
\medskip
\begin{corollary}\label{cor:kappa}
Let $\bm{A}=\bm{P}-\bm{Q}$ so that $\bm{P}^{-1}\bm{Q}$ has the property that $\varrho(\bm{P}^{-1}\bm{Q})<1$,
then there exists a matrix norm $\normb{\cdot}$ such that 
the conditioning number of matrix $\bm{\mathcal{P}}_n^{-1}\bm{A}$
with respect to this norm satisfies:
\begin{EQ}
  \kappa(\bm{\mathcal{P}}_n^{-1}\bm{A})
  \leq
  \frac{1+0.8^n}{1-0.8^n} \leq 9
\end{EQ}
where $\kappa(\bm{M}) = \normb{\bm{M}}\normb{\bm{M}^{-1}}$.
\end{corollary}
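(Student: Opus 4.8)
The plan is to re-run the argument of Lemma~\ref{lem:kappa:0} after replacing the arbitrary norm by one that is \emph{tailored} to $\bm{P}^{-1}\bm{Q}$, so that the threshold $n_\varepsilon$ collapses to $1$. The only extra ingredient needed is the classical fact (see~\cite{householder:1965,stoer:2002}) that for every square matrix $\bm{M}$ and every $\varepsilon>0$ there is a vector norm on $\mathbb{C}^N$ whose induced operator norm $\normb{\cdot}$ satisfies $\normb{\bm{M}}\le\varrho(\bm{M})+\varepsilon$; such an induced norm is submultiplicative and has $\normb{\bm{I}}=1$, which is all that the proof of Lemma~\ref{lem:kappa:0} used about the norm.

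First I would apply this fact to $\bm{M}=\bm{P}^{-1}\bm{Q}$. In the present setting $\bm{P}$ and $\bm{Q}$ are given by~\eqref{eq:PQ}, so Corollary~\ref{cor:PQ} yields $\varrho(\bm{P}^{-1}\bm{Q})\le\sqrt{2}/2<0.8$; hence $\varepsilon\DEF 0.8-\varrho(\bm{P}^{-1}\bm{Q})$ is strictly positive and there is an induced norm $\normb{\cdot}$ with $\normb{\bm{P}^{-1}\bm{Q}}\le\varrho(\bm{P}^{-1}\bm{Q})+\varepsilon=0.8$.

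Next, with this norm fixed, put $\bm{M}=(\bm{P}^{-1}\bm{Q})^n$. Submultiplicativity gives $\normb{\bm{M}}\le\normb{\bm{P}^{-1}\bm{Q}}^n\le 0.8^n<1$ for every $n\ge1$, so the Neumann-series estimate $\normb{(\bm{I}-\bm{M})^{-1}}\le(1-\normb{\bm{M}})^{-1}$ holds exactly as in Lemma~\ref{lem:kappa:0}, with no restriction on $n$. Since $\bm{\mathcal{P}}_n^{-1}\bm{A}=\bm{I}-(\bm{P}^{-1}\bm{Q})^n=\bm{I}-\bm{M}$, and since $t\mapsto(1+t)/(1-t)$ is increasing on $[0,1)$, I obtain $\kappa(\bm{\mathcal{P}}_n^{-1}\bm{A})=\normb{\bm{I}-\bm{M}}\,\normb{(\bm{I}-\bm{M})^{-1}}\le(1+\normb{\bm{M}})/(1-\normb{\bm{M}})\le(1+0.8^n)/(1-0.8^n)$. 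Finally, $0.8^n$ is decreasing in $n$ and $(1+t)/(1-t)$ is increasing, so the bound is largest at $n=1$, where it equals $1.8/0.2=9$; this yields the uniform estimate $\kappa(\bm{\mathcal{P}}_n^{-1}\bm{A})\le 9$.

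I expect no genuine obstacle: the construction of the $(\varrho+\varepsilon)$-norm is standard, and the remainder is a verbatim repetition of Lemma~\ref{lem:kappa:0} with $n_\varepsilon=1$. The single point worth checking is that the chosen $\varepsilon$ is positive, which holds because $\sqrt{2}/2\approx0.707<0.8$; if one only assumed the weaker $\varrho(\bm{P}^{-1}\bm{Q})<1$, the same scheme still works but the constant $0.8$ (and hence $9$) must be replaced by values depending on a chosen number strictly between $\varrho(\bm{P}^{-1}\bm{Q})$ and $1$.
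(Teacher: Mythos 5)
Your proposal is correct and follows essentially the same route as the paper: invoke the classical $(\varrho+\varepsilon)$-norm result to get $\normb{\bm{P}^{-1}\bm{Q}}\le 0.8$ (using $\varrho(\bm{P}^{-1}\bm{Q})\le\sqrt{2}/2$ from Corollary~\ref{cor:PQ}), then rerun Lemma~\ref{lem:kappa:0} with submultiplicativity so that $n_\varepsilon=1$, and evaluate the bound at $n=1$ to get $9$. Your closing observation --- that the hypothesis $\varrho(\bm{P}^{-1}\bm{Q})<1$ alone would not yield the specific constants $0.8$ and $9$, which really come from the preceding corollary --- is accurate and is also implicit in the paper's own choice of $\varepsilon\le 0.8-\sqrt{2}/2$.
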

\begin{proof}
Recall that for any matrix $\bm{M}$ and $\epsilon>0$ there exists
a matrix norm $\normb{\cdot}$ such that
$\normb{\bm{M}}\leq \varrho(\bm{M})+\epsilon$.
This is a classical result of linear algebra, 
see e.g.~section 2.3 of~\cite{householder:1965}
or section 6.9 of~\cite{stoer:2002}.
Thus, given $\bm{P}^{-1}\bm{Q}$ and chosing $0<\varepsilon\leq 0.8-\sqrt{2}/2$
there exists a matrix norm $\normb{\cdot}$ such that 
$\normb{\bm{P}^{-1}\bm{Q}} \leq 0.8$. The proof follows from 
Lemma~\ref{lem:kappa:0}.
\end{proof}

From Corollary~\ref{cor:PQ} using the Euclidean norm and choosing $\varepsilon$
such that $\varrho(\bm{P}^{-1}\bm{Q})+\varepsilon=\sqrt{2}/2+\varepsilon=0.8$ for $n\geq n_\varepsilon$, the condition number of the preconditioned matrix 
satisfies:
\begin{EQ}\label{eq:kappa:estimate}
  \kappa_2(\bm{\mathcal{P}}_n^{-1}\bm{A})
  \leq
  \dfrac{1 + 0.8^n}
        {1 - 0.8^n},
  \qquad
  \kappa_2(\bm{M}) = \norm{\bm{M}}_2\norm{\bm{M}^{-1}}_2.
\end{EQ}

This estimate shows that using $n$ steps of MHSS,
with $n$ large enough, the condition number of the preconditioned
system can be bounded independently of the size of the linear
system. In practice, when $n=1$ the reduction of the condition
number is enough, in fact, Corollary \ref{cor:kappa} shows that, using the appropriate norm,
the condition number of the preconditoned linear system 
is less than $9$, independently of its size.

\begin{remark}\label{rem:MHSS}
From reference~\cite{Axelsson:2003}, when the condition number $\kappa$ 
is large, the estimate of $m$ conjugate gradient (CG) iterations satisfies
$m \propto \sqrt{\kappa}$. The cost of computation is proportional to the number of iterations, whereas the cost of each iteration is proportional to $1+C n$, where $C$ is the 
cost of an iteration of MHSS used as preconditioner, relative to the cost of a CG step.
Thus, using $\kappa_2$ from~\eqref{eq:kappa:estimate}, when $n$ is large enough,
a rough estimate of the computational cost is
\begin{EQ}\label{eq:cost}
   \textrm{cost} \propto \sqrt{\kappa_2(\bm{\mathcal{P}}_n^{-1}\bm{A})}(1+C n)
   \propto\sqrt{\frac{1+0.8^n}{1-0.8^n}}(1+Cn).
\end{EQ}
Fixing $C$, the cost~\eqref{eq:cost}, as a function of $n$ alone, is convex and has 
a minimum for $n = n_{\min}(C)$ which satisfies:
\begin{EQ}\label{eq:C:inverse}
    C =\frac{-a^{n}\ln(a)}{a^{n}\ln(a)-a^{2{n}}+1}, \qquad a=0.8.
\end{EQ}
The inverse function $C(n)$ which satisfies $n_{\min}(C(n))=n$, 
is the r.h.s of~\eqref{eq:C:inverse}, 
moreover, $C(n)$ is a monotone decreasing function so that 
also $n_{\min}(C)$ is monotone decreasing.

The following table shows the constant $C$
as a function of $n$ giving the critical values of $C$ 
such that $n$ steps of MHSS are better than $n-1$ steps,
\begin{center}
\begin{tabular}{c|ccccccc}
   $n_{\min}$     & 1    & 2    & 3    & 4    & 5    & 6    & 7 \\ 
   \hline
   $C(n_{\min})$ & 0.98 & 0.47 & 0.29 & 0.2 & 0.14 & 0.1 & 0.07
\end{tabular}
\end{center}
This means that it is convenient to use $n>1$ steps in the preconditioner
MHSS only if the cost of one step of MHSS is less than $0.47$, i.e., 
the half of one step of the
Conjugate Gradient method.
A situation that never happens in practice.
\end{remark}

According to Remark~\ref{rem:MHSS}, it is considered only $\bm{\mathcal{P}}_n$ with $n=1$, i.e. 
$\bm{\mathcal{P}}_1=\bm{P}$ as preconditioner for linear system~\eqref{eq:sys:cplx}.

\section{Iterative solution of complex linear system}\label{sec:iterative}
From the previous section, it is clear that the use of one iteration step of MHSS
is a good choice that lowers the conditioning
number of the original complex linear system~\eqref{eq:sys:cplx}.
The resulting preconditioner matrix is 
$\bm{P} = (1+\imag)(\bm{B}+\bm{C})$ as defined in~\eqref{eq:PQ}.
Preconditioner $\bm{P}$ will be used together with semi-iterative methods
specialized in the solution of complex problems.
Examples of those methods there are COCG \cite{Vorst:1990,Sogabe:2009} or COCR \cite{golub:1996,Saad:2003,Sogabe:2007,Sogabe:2009}. They are briefly exposed next.
\begin{center}
  \begin{minipage}{.4\textwidth}%
    \begin{algorithm}[H]
      $\bm{r}\assign\bm{b}-\bm{A}\bm{x}$\;
      $\widetilde{\bm{r}}\assign\bm{P}^{-1}\bm{r}$\;
      $\bm{p}\assign\widetilde{\bm{r}}$\;
      $\rho\assign[\widetilde{\bm{r}},\bm{r}]$\;
      \While{$\norm{\bm{r}}>\varepsilon \norm{\bm{b}}$}{
        $\bm{q}\assign\bm{A}\bm{p}$\;
        $\mu\assign[\bm{q},\bm{p}]$\; 
        $\alpha\assign\rho/\mu$\;
        $\bm{x}\assign\bm{x} + \alpha\,\bm{p}$\;
        $\bm{r}\assign\bm{r} - \alpha\,\bm{q}$\;
        $\widetilde{\bm{r}}\assign\bm{P}^{-1}\bm{r}$\;
        $\beta\assign\rho$\;
        $\rho\assign[\widetilde{\bm{r}},\bm{r}]$\; 
        $\beta\assign\rho/\beta$\;
        $\bm{p}\assign\widetilde{\bm{r}} + \beta\,\bm{p}$\;
      }
      \caption{COCG}\label{algo:COCG}
    \end{algorithm}
  \end{minipage}%
  \begin{minipage}{.4\textwidth}%
    \begin{algorithm}[H]
      $\bm{r}\assign\bm{b}-\bm{A}\bm{x};\;$
      $\widetilde{\bm{r}}\assign\bm{P}^{-1}\bm{r};\;$
      $\bm{p}\assign\widetilde{\bm{r}}$\;
      $\bm{q}\assign\bm{A}\bm{p}$\;
      $\rho\assign[\widetilde{\bm{r}},\bm{q}]$\;
      \While{$\norm{\bm{r}}>\varepsilon \norm{\bm{b}}$}{
        $\widetilde{\bm{q}}\assign\bm{P}^{-1}\bm{q}$\;
        $\alpha\assign\rho/[\widetilde{\bm{q}},\bm{q}]$\;
        $\bm{x}\assign \bm{x} + \alpha\,\bm{p}$\;
        $\bm{r}\assign \bm{r} - \alpha\,\bm{q}$\;
        $\widetilde{\bm{r}}\assign \widetilde{\bm{r}} - \alpha\,\widetilde{\bm{q}}$\;
        $\bm{t}\assign \bm{A}\widetilde{\bm{r}}$\;
        $\beta\assign\rho$\;
        $\rho\assign[\widetilde{\bm{r}},\bm{t}]$\;
        $\beta\assign\rho/\beta$\;
        $\bm{p}\assign\widetilde{\bm{r}} + \beta\,\bm{p}$\;
        $\bm{q}\assign\bm{t} + \beta\,\bm{q}$\;
      }
      \caption{COCR (as in \cite{Sogabe:2009})}\label{algo:COCR}
    \end{algorithm}
  \end{minipage}%
\end{center}
There are also other methods available for performing this task, for example CSYM~\cite{Bunse:1999} or QMR~\cite{Freund:1992}.
The application of the preconditioner $\bm{P}$ 
for those methods is equivalent to the solution of two real 
SPD systems depending on $\bm{B}+\bm{C}$, as in Remark \ref{rem:twosys}. Of course, 
one can use a direct method \cite{Benzi:2002,Duff:1986,george:1981,Davis:2006}, or a conjugated gradient method~\cite{Bai:2008}
with incomplete \cho{} factorizations, or approximate inverse as 
preconditioner~\cite{Benzi:2002,Bergamaschi:2000,Dubois:1979,Dupont:1968,More:1999,Saad:1996}.
Nevertheless, it is not so convenient to adopt this philosophy for very large linear systems
because for example $\bm{B}+\bm{C}$ can not be formed or just the fill-in of the 
incomplete \cho{}-based factorization is unacceptable.
In the next sections,  a 
polynomial preconditioner based on 
\emph{orthogonal polynomials} is presented, it will allow to solve very large complex linear systems.

The suggested strategy to get the solution of the complex linear system 
is resumed in Algorithm~\ref{algo:2}.
\begin{center}
  \begin{algorithm}[H]
    Solve linear system by using iterative method like COCG (Algorithm~\ref{algo:COCG})
    or COCR (Algorithm~\ref{algo:COCR})\;
    For the precontioner $\bm{P}$ use the following strategy\;
    \uIf{complete \cho{} of $\bm{B}+\bm{C}$ computable}{
      Compute $\bm{L}\bm{L}^T = \bm{B}+\bm{C}$ and use $\bm{P}=(1+\imag)\bm{L}\bm{L}^T$ 
      as preconditioner.
    }
    \uElseIf{incomplete \cho{} of $\bm{B}+\bm{C}$ computable}{
      Compute incomplete \cho{} $\bm{L}\bm{L}^T+\bm{E} = \bm{B}+\bm{C}$\;
      \uIf{$\norm{\bm{E}}$ ``small''}{
        The incomplete \cho{} is a good approximation of $\bm{B}+\bm{C}$, thus, 
        use $\bm{P}=(1+\imag)\bm{L}\bm{L}^T$ as preconditioner
      }
      \uElseIf{$\norm{\bm{E}}$ not ``too large''}{
        To compute $\bm{P}^{-1}\bm{v}$ the incomplete \cho{} 
        is used as preconditioner for the solution 
        of the two real system $(\bm{B}+\bm{C})\bm{z} = \bm{v}/(1+\imag)$ by PCG method.
      }
      \Else{
        goto \ref{line}
      }
    }
    \Else{
      In this case incomplete \cho{} is too inaccurate
      or matrix is too large so that incomplete \cho{} can not be computed.
      In this case use proposed polynomial preconditioner.
      \label{line}
    }
    \caption{Solution strategy for the solution of 
             $\bm{A}\bm{x}=(\bm{B}+\imag\bm{C})\bm{x}=\bm{b}$}\label{algo:2}
  \end{algorithm}
  \medskip
\end{center}

\section{Scaling the complex linear system}\label{sec:scaling}
The polynomial preconditioner presented in the next section depends
on the knowledge of an interval containing eigenvalues.
Scaling is a cheap procedure  to recast the 
problem into a one with eigenvalues in the interval $(0,1]$.
Consider the diagonal matrix $\bm{S}$ and the linear system~\eqref{eq:sys:cplx},
the scaled system is:
\begin{EQ}\label{eq:sys:scaled}
  (\bm{S}\bm{A}\bm{S})\bm{w} =
  (\bm{S}\bm{B}\bm{S}+\imag\bm{S}\bm{C}\bm{S})\bm{w} 
  = \bm{S}\bm{b},
  \qquad
  \textrm{with}
  \qquad \bm{x} = \bm{S}\bm{w},
\end{EQ}
where $\bm{S}$ is a real diagonal matrix with positive entries on the diagonal.
The scaled system inherits the properties of the original and still 
has the matrices $\bm{S}\bm{B}\bm{S}$ and $\bm{S}\bm{C}\bm{S}$ semi-SPD 
with $\bm{S}\bm{B}\bm{S}+\bm{S}\bm{C}\bm{S}$ SPD.
The next lemma shows how to choose a good scaling factor $\bm{S}$ 
used forward:
\begin{lemma}\label{lem:4.1}
 Let $\bm{M}$ be a SPD matrix and $\bm{S}$ a diagonal matrix with
 $S_{ii} = \big(\sum_{j=1}^n \abs{M_{ij}}\big)^{-1/2}$,
 then the scaled matrix $\bm{S}\bm{M}\bm{S}$
 has the eigenvalues in the range $(0,1]$.
\end{lemma}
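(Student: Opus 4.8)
The plan is to split the statement into its two halves: the eigenvalues of $\bm{S}\bm{M}\bm{S}$ are positive (easy) and they do not exceed $1$ (the substantive part). First I would check that $\bm{S}$ is well defined: since $\bm{M}$ is SPD every diagonal entry satisfies $M_{ii}>0$, hence $\sum_{j}\abs{M_{ij}}\geq M_{ii}>0$ and $S_{ii}>0$, so $\bm{S}$ is a real invertible diagonal matrix with positive entries. Then $\bm{S}\bm{M}\bm{S}$ is congruent to $\bm{M}$, so by Sylvester's law of inertia it is again SPD; in particular all its eigenvalues are strictly positive, which gives the left endpoint of $(0,1]$.

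For the upper bound the reformulation is the key step. Writing $\bm{D}=\bm{S}^{-2}$ for the diagonal matrix with $D_{ii}=\sum_{j}\abs{M_{ij}}$, the inequality $\lambda_{\max}(\bm{S}\bm{M}\bm{S})\leq 1$ is equivalent (conjugating by $\bm{S}^{-1}$, a congruence, hence order preserving on symmetric matrices) to $\bm{M}\leq\bm{D}$ in the Loewner order, i.e. to the positive semidefiniteness of $\bm{G}\DEF\bm{D}-\bm{M}$. So I would reduce the whole claim to: $\bm{G}$ is positive semidefinite.

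That last fact follows from a diagonal-dominance argument. The matrix $\bm{G}$ is symmetric; its off-diagonal entries are $G_{ij}=-M_{ij}$ for $i\ne j$, and its diagonal entries are $G_{ii}=\sum_{j}\abs{M_{ij}}-M_{ii}=\sum_{j\ne i}\abs{M_{ij}}$, where $M_{ii}=\abs{M_{ii}}$ was used. Hence each row obeys $G_{ii}=\sum_{j\ne i}\abs{G_{ij}}\geq 0$, so $\bm{G}$ is symmetric, weakly diagonally dominant, with nonnegative diagonal. By Gershgorin's circle theorem every eigenvalue of $\bm{G}$ lies in some interval $[\,G_{ii}-\sum_{j\ne i}\abs{G_{ij}},\,G_{ii}+\sum_{j\ne i}\abs{G_{ij}}\,]=[0,2G_{ii}]$, hence is nonnegative; as $\bm{G}$ is symmetric its spectrum is real, so $\bm{G}$ is positive semidefinite. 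Conjugating back, $\bm{v}^{T}(\bm{I}-\bm{S}\bm{M}\bm{S})\bm{v}=(\bm{S}\bm{v})^{T}\bm{G}(\bm{S}\bm{v})\geq 0$ for every $\bm{v}$ (using $\bm{S}\bm{D}\bm{S}=\bm{I}$), so $\lambda_{\max}(\bm{S}\bm{M}\bm{S})\leq 1$, and together with positivity the spectrum lies in $(0,1]$.

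I expect the only real obstacle to be \emph{spotting} the reformulation: realizing that ``eigenvalues $\leq 1$'' is precisely $\bm{D}-\bm{M}\geq 0$ and that the particular choice $S_{ii}=(\sum_{j}\abs{M_{ij}})^{-1/2}$ is engineered so that the diagonal of $\bm{D}-\bm{M}$ equals exactly its off-diagonal absolute row sum. It is worth warning here that the naive route fails: one cannot simply bound $\varrho(\bm{S}\bm{M}\bm{S})$ by its maximum absolute row sum, because the cross terms $S_{ii}\abs{M_{ij}}S_{jj}$ are not controlled the right way — that estimate would require $\sum_{j}a_j^{1/2}\leq(\sum_{j}a_j)^{1/2}$, which is false. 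It is the Loewner-order argument above that makes the tuned scaling pay off.
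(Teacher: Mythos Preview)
Your argument is correct, but the paper takes a shorter and rather different route. After noting that $\bm{S}\bm{M}\bm{S}$ is SPD, the paper observes that $\bm{S}\bm{M}\bm{S}$ is \emph{similar} to the (non-symmetric) matrix $\bm{S}^{2}\bm{M}$, via $\bm{S}(\bm{S}\bm{M}\bm{S})\bm{S}^{-1}=\bm{S}^{2}\bm{M}$, so the two share the same spectrum. Then a direct infinity-norm bound on the similar matrix finishes the job: the $i$-th absolute row sum of $\bm{S}^{2}\bm{M}$ is $S_{ii}^{2}\sum_{j}\abs{M_{ij}}=1$, hence $\lambda_{\max}(\bm{S}\bm{M}\bm{S})=\lambda_{\max}(\bm{S}^{2}\bm{M})\leq\norm{\bm{S}^{2}\bm{M}}_{\infty}=1$.

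This makes your closing warning worth revisiting: the ``naive route'' of bounding the spectral radius by a maximum row sum does in fact work, just not on $\bm{S}\bm{M}\bm{S}$ itself---one has to first pass to the similar matrix $\bm{S}^{2}\bm{M}$, where the awkward cross factors $S_{ii}S_{jj}$ collapse to $S_{ii}^{2}$. Your Loewner-order/diagonal-dominance reformulation is a perfectly good alternative and arguably more ``symmetric'' in spirit (everything stays in the cone of PSD matrices), but it is longer; the paper's similarity trick gets the bound in one line.
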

\begin{proof}
Notice that $\bm{S}\bm{M}\bm{S}$ is symmetric and positive defined 
and is similar to $\bm{S}^{2}\bm{M}$.
Moreover, the estimate 
$\lambda_{\max}(\bm{S}^{2}\bm{M})\leq \norm{\bm{S}^{2}\bm{M}}_{\infty} = 1$
follows trivially.
\end{proof}

\begin{assumption}\label{ass:1}
  From Lemma~\ref{lem:4.1}, the linear system~\eqref{eq:sys:cplx}
  is scaled to satisfy:
  \begin{itemize}
    \item Matrices $\bm{B}$ and $\bm{C}$ are semi-SPD;
    \item Matrix $\bm{B}+\bm{C}$ is SPD with eigenvalues in $(0,1]$.
  \end{itemize}

\end{assumption}

\section{Preconditioning with polynomials}\label{sec:preco}
On the basis of the results of the previous sections with Assumption~\ref{ass:1},
the linear system to be preconditioned has the form
$\bm{A}\bm{x} = \bm{b}$ with $\bm{A}=\bm{B}+\imag\bm{C}$ 
with $\bm{B}$ and $\bm{C}$ semi-SPD and $\bm{M}=\bm{B}+\bm{C}$ SPD
with eigenvalues distributed in the interval $(0,1]$.

A good preconditioner for this linear system is one step of MHSS
in Algorithm~\ref{algo:1}, which results in a multiplication by
$\bm{P}^{-1}$ where $\bm{P}=(1+\imag)(\bm{B}+\bm{C})=(1+\imag)\bm{M}$.
Here the following polynomial approximation of $\bm{P}^{-1}$ is proposed:
\begin{EQ}
  \bm{P}^{-1} 
  =  \dfrac{1-\imag}{2} \bm{M}^{-1}
  \approx \dfrac{1-\imag}{2}s_{m}(\bm{M}).
\end{EQ}
The matrix polynomial $s_{m}(\bm{M})$ must be an approximation
of the inverse of $\bm{M}$, i.e. $s_{m}(\bm{M})\bm{M}\approx \bm{I}$
where $s_{m}(x)$ is a polynomial with degree $m$.
A measure of the quality of the preconditioned matrix 
for a generic polynomial $s(x)$ is the distance
from the identity matrix:
\begin{EQ}\label{eq:quality:poly}
  \mathcal{Q}_\sigma(s)
  = 
  \norm{s(\bm{M})\bm{M}-\bm{I}}_2 = 
  \max_{\lambda\in \sigma(\bm{M})}\abs{1-\lambda s(\lambda)},
\end{EQ}
where $\sigma(\bm{M})=\{\lambda_1,\ldots,\lambda_n\}$ is the spectrum of $\bm{M}$.
If, in particular, the preconditioned matrix $s_{m}(\bm{M})\bm{M}$
is the identity  matrix then $\mathcal{Q}_\sigma(s_{m})=0$.
Thus, the polynomial preconditioner $s_{m}$ should concentrate the eigenvalues 
of $s_{m}(\bm{M})\bm{M}$ around $1$ in order to be effective.

A preconditioner polynomial can be constructed by minimizing $\mathcal{Q}_{\sigma}(s)$
of equation~\eqref{eq:quality:poly} within the space $\Pi_{m}$ of polynomials of degree at most $m$.
This implies the knowledge of the spectrum of matrix $\bm{M}$ 
which is in general not available making problem~\eqref{eq:quality:poly} unfeasible.
The following approximation of quality measure~\eqref{eq:quality:poly} is feasible
\begin{EQ}\label{eq:ch:poly:approximate}
  \mathcal{Q}_{[\epsilon,1]}(s)
  = \max_{\lambda\in[\epsilon,1]}\abs{1-\lambda s(\lambda)},
  \qquad
  \sigma(\bm{M})\subset[\epsilon,1]
\end{EQ}
and needs the knowledge of $[\epsilon,1]$, an interval for $\epsilon>0$, containing the spectrum of $\bm{M}$.
The polynomial which minimizes $\mathcal{Q}_{[\epsilon,1]}(s)$ for $s\in\Pi_{m}$
is well known and is connected to an appropriately 
scaled and shifted \cheb{} polynomial.
The construction of such solution is described in section \ref{sec:cheby} 
and was previously considered
by Ashby et al.~\cite{Ashby:1991,Ashby:1992}, Johnson et al.~\cite{Johnson:1983},
Freund~\cite{Freund:1990}, Saad~\cite{Saad:1985} and Axelsson~\cite{Axelsson:1985}.
The computation of $\mathcal{Q}_{[\epsilon,1]}(s)$ needs the estimation 
of a positive lower bound of the minimum eigenvalue of $\bm{M}$, which is,
in general, expensive or infeasible.
The estimate $\epsilon=0$ cannot be used because $\mathcal{Q}_{[0,1]}(s)\geq 1$
for any polynomial $s$. A different way to choose $\epsilon$ is 
analysed later in this section.
Saad observed that the use of \cheb{} polynomials with 
the conjugate gradient method, i.e. the polynomial which minimizes the 
condition number of the preconditioned system, 
is in general far from being the best polynomial preconditioner, i.e. 
the one that minimizes the CG iterations~\cite{Saad:1985}.
Practice shows that although non optimal, \cheb{} preconditioners perform well in many situations.
The following integral average quality measure
proposed in~\cite{stiefel:1958,Saad:1985,Johnson:1983,Ashby:1992} 
is a feasible alternative to~\eqref{eq:ch:poly:approximate}:
\begin{EQ}\label{eq:quality:poly:integral}
  \mathcal{Q}(s)
  = \int_0^1\abs{1-\lambda s(\lambda)}^2\,\mathrm{d}\lambda,
  \qquad
  \sigma(\bm{M})\subset[0,1].
\end{EQ}
The preconditioner polynomial $s_{m}$ 
proposed here is the solution of minimization
of quality measure~\eqref{eq:ch:poly:approximate} or~\eqref{eq:quality:poly:integral}:
\begin{EQ}\label{eq:minq}
  s_{m} = \mathop{\textrm{argmin}}_{s\in\Pi_{m}} \mathcal{Q}_{[\epsilon,1]}(s),
  \qquad\textrm{or}\qquad
  s_{m} = \mathop{\textrm{argmin}}_{s\in\Pi_{m}} \mathcal{Q}(s),
\end{EQ}
Solution of problem~\eqref{eq:minq} is detailed in the next sections. The proposed solution to the first problem is by means of the \cheb{} polynomials, while the solution of the second problem is done with the Jacobi weight.

\subsection{\cheb{} polynomial preconditioner}\label{sec:cheby}
The solution of minimization problem~\eqref{eq:minq}
with quality measure $\mathcal{Q}_{[\epsilon,1]}(s)$
is well known and can be written in terms of \cheb{} polynomials~\cite{Ashby:1991,Ashby:1992}:
\begin{EQ}\label{eq:poly:ch}
   1-\lambda s_m(\lambda) = \dfrac{T_{m+1}^{\epsilon}(\lambda)}{T_{m+1}^{\epsilon}(0)},
\end{EQ}
where $T_{m+1}^{\epsilon}(\lambda)$ is the $(m+1)$-th \cheb{} polynomial
scaled in the interval $[\epsilon,1]$. Polynomials 
$T_{k}^{\epsilon}(\lambda)$ satisfy the recurrence
\begin{EQ}\label{eq:ch:recurr}
   T_0^{\epsilon}(x) = 1,\qquad
   T_1^{\epsilon}(x) = ax+b,\qquad
   T_{n+1}^{\epsilon}(x) = 2(ax+b) T_{n}^{\epsilon}(x)-T_{n-1}^{\epsilon}(x),
\end{EQ}
where
\begin{EQ}
   a = \dfrac{2}{1-\epsilon},\qquad
   b = -\dfrac{1+\epsilon}{1-\epsilon}.
\end{EQ}
From~\eqref{eq:poly:ch}, the preconditioner polynomial $s_m(\lambda)$ 
becomes 
\begin{EQ}\label{eq:poly:ch2}
   s_m(\lambda) =
   \dfrac{1}{\lambda}\left(
   1-\dfrac{T_{m+1}^{\epsilon}(\lambda)}{T_{m+1}^{\epsilon}(0)}
   \right)
\end{EQ}
and from~\eqref{eq:ch:recurr} it is possibile to
give a recursive definition for $s_m(\lambda)$ too.

\begin{lemma}[Recurrence formula for preconditioner]\label{lem:preco:pre}
  Given the polynomials $q_n$ defined by the recurrence
  \begin{EQ}[rcl]\label{eq:Q:orto}
    q_0(x)     &=& 1,\qquad
    q_1(x)     = a_0x + b_0, \\
    q_{n+1}(x) &=& (a_nx+b_n)q_n(x) + c_n q_{n-1}(x),\qquad n=1,2,3,\ldots
  \end{EQ}
  then the polynomials
  $r_n(x) = q_n(x)/q_n(0)$ and $s_{n}(x) = (1-r_{n+1}(x))/x$ satisfy the recurrences:
  \begin{EQ}[rclrcl]\label{eq:s:recurr}
    r_0(x) &=& 1,\qquad &
    s_0(x) 
    &=& a'_0,
    \\
    r_1(x) &=&1-a'_0x,\qquad &
    s_1(x) 
    &=&
    a'_1 x+b'_1,
    \\
    r_{n+1}(x) &=& (a'_nx+b'_n) r_n(x)+c'_n r_{n-1}(x),\qquad &
    s_n(x) &=& (a'_nx+b'_n) s_{n-1}(x)+c'_n s_{n-2}(x)-a'_n,
  \end{EQ}
  where
  \begin{EQ}
    a'_0 = -\frac{a_0}{b_0},\quad
    a'_1 = -\frac{a_0a_1}{b_0b_1+c_1},\quad
    b'_1 = -\frac{a_0b_1+a_1b_0}{b_0b_1+c_1},\quad
    a'_n = a_n\gamma_n, \quad
    b'_n = b_n\gamma_n, \quad
    c'_n = c_n\gamma_{n-1}\gamma_n
  \end{EQ}
  and $\gamma_n=q_n(0)/q_{n+1}(0)$ satisfies the recurrence
  \begin{EQ}\label{eq:gamma}
    \gamma_1 = \frac{b_0}{b_0b_1+c_1},\qquad
    \gamma_n = \frac{1}{b_n+c_n\gamma_{n-1}}.
  \end{EQ}
\end{lemma}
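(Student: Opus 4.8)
The plan is to establish the three recurrences in~\eqref{eq:s:recurr} essentially by unwinding the definitions, and the only real work is bookkeeping. First I would handle the scaling sequence $\gamma_n = q_n(0)/q_{n+1}(0)$: evaluating the defining recurrence~\eqref{eq:Q:orto} at $x=0$ gives $q_{n+1}(0) = b_n q_n(0) + c_n q_{n-1}(0)$, and dividing through by $q_n(0)$ yields $1/\gamma_n = b_n + c_n \gamma_{n-1}$, which is the second formula in~\eqref{eq:gamma}; the base case $\gamma_1$ comes from $q_2(0) = b_1 q_1(0) + c_1 q_0(0) = b_1 b_0 + c_1$ together with $q_1(0)=b_0$, $q_0(0)=1$. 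Here I would note in passing that the hypotheses implicitly require $q_n(0)\neq 0$ for all $n$ (equivalently $b_0\neq 0$ and $b_n + c_n\gamma_{n-1}\neq 0$), so that $r_n$ and $\gamma_n$ are well defined.

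Next I would derive the recurrence for $r_n(x) = q_n(x)/q_n(0)$. Dividing~\eqref{eq:Q:orto} by $q_{n+1}(0)$ and inserting factors $q_n(0)/q_n(0)$ and $q_{n-1}(0)/q_{n-1}(0)$ in the appropriate places gives
\begin{EQ}
  r_{n+1}(x) = (a_n x + b_n)\frac{q_n(0)}{q_{n+1}(0)} r_n(x) + c_n\frac{q_{n-1}(0)}{q_{n+1}(0)} r_{n-1}(x),
\end{EQ}
and since $q_n(0)/q_{n+1}(0) = \gamma_n$ and $q_{n-1}(0)/q_{n+1}(0) = \gamma_{n-1}\gamma_n$, this is exactly $r_{n+1}(x) = (a'_n x + b'_n) r_n(x) + c'_n r_{n-1}(x)$ with the stated $a'_n,b'_n,c'_n$. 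The cases $r_0$ and $r_1$ are immediate from $q_0(x)=1$ and $q_1(x)=a_0 x + b_0$, $q_1(0)=b_0$, giving $r_1(x) = 1 - a'_0 x$ with $a'_0 = -a_0/b_0$. The special formulas for $a'_1$ and $b'_1$ are obtained by writing $r_2(x) = q_2(x)/q_2(0)$ directly: $q_2(x) = (a_1 x + b_1)(a_0 x + b_0) + c_1$, so the linear term needs care — but in fact $s_1$, not $r_2$, is what carries these constants, so I would fold this computation into the $s$-recurrence below.

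Finally, for $s_n(x) = (1 - r_{n+1}(x))/x$ I would substitute the $r$-recurrence. Write $1 - r_{n+1}(x) = 1 - (a'_n x + b'_n) r_n(x) - c'_n r_{n-1}(x)$. Using $r_n(x) = 1 - x s_{n-1}(x)$ and $r_{n-1}(x) = 1 - x s_{n-2}(x)$ (valid for $n\geq 2$), expand:
\begin{EQ}
  1 - r_{n+1}(x) = 1 - (a'_n x + b'_n)(1 - x s_{n-1}(x)) - c'_n(1 - x s_{n-2}(x)).
\end{EQ}
The constant terms combine to $1 - b'_n - c'_n$, which vanishes because $b'_n + c'_n = b_n\gamma_n + c_n\gamma_{n-1}\gamma_n = \gamma_n(b_n + c_n\gamma_{n-1}) = \gamma_n/\gamma_n = 1$ by~\eqref{eq:gamma}. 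What remains is divisible by $x$, and dividing gives $s_n(x) = (a'_n x + b'_n)s_{n-1}(x) + c'_n s_{n-2}(x) - a'_n$, as claimed. The base cases $s_0(x) = (1 - r_1(x))/x = a'_0$ and $s_1(x) = (1 - r_2(x))/x$ are then a short direct computation from $q_2$, and matching the linear polynomial $(1 - r_2(x))/x$ against $a'_1 x + b'_1$ pins down the stated values $a'_1 = -a_0 a_1/(b_0 b_1 + c_1)$ and $b'_1 = -(a_0 b_1 + a_1 b_0)/(b_0 b_1 + c_1)$.

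The main obstacle is not conceptual but organizational: the identity $b'_n + c'_n = 1$ is the linchpin that makes the "$-a'_n$" correction term come out right and must be invoked at exactly the right moment, and the low-index cases ($n=0,1$) do not fit the generic pattern and have to be checked separately and carefully against the special definitions of $a'_0$, $a'_1$, $b'_1$.
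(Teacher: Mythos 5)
Your proposal is correct and follows essentially the same route as the paper: divide the $q$-recurrence by $q_{n+1}(0)$ to get the $r$-recurrence and the $\gamma$-recurrence, then substitute $r_n(x)=1-xs_{n-1}(x)$, cancel the constant terms via $b'_n+c'_n=\gamma_n(b_n+c_n\gamma_{n-1})=1$, and divide by $x$. Your version is in fact slightly more careful than the paper's, since you isolate the linchpin identity $b'_n+c'_n=1$ explicitly and note the implicit hypothesis $q_n(0)\neq 0$.
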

\begin{proof}
Take the ratio
\begin{EQ}[rcl]
   \frac{q_{n+1}(x)}{q_{n+1}(0)}
   &=& (a_nx+b_n)\frac{q_n(x)}{q_{n+1}(0)} + c_n \frac{q_{n-1}(x)}{q_{n+1}(0)},
   \\
   r_{n+1}(x) &=& (a_nx+b_n)r_n(x)\frac{q_n(0)}{q_{n+1}(0)} +
                   c_n r_{n-1}(x)\frac{q_{n-1}(0)}{q_n(0)}\frac{q_n(0)}{q_{n+1}(0)},
   \\
   r_{n+1}(x) &=& (a_nx+b_n)r_n(x)\gamma_n + c_n r_{n-1}(x)\gamma_n\gamma_{n-1},
\end{EQ}
and notice that $r_n(0)=1$ for all $n$. Recurrence for $\gamma_n$
is trivially deduced.
From $r_{n+1}(x) = 1-x s_n(x)$ and by using~\eqref{eq:gamma},
\begin{EQ}[rcl]
   r_{n+1}(x) &=& (a_nx+b_n)r_n(x)\gamma_n + c_n r_{n-1}(x)\gamma_n\gamma_{n-1},
   \\
   1-x s_n(x) &=& (a_nx+b_n)(1-x s_{n-1}(x))\gamma_n 
       + c_n (1-x s_{n-2}(x))\gamma_n\gamma_{n-1},
   \\
   1-x s_n(x) &=& 
       a_nx(1-x s_{n-1}(x))\gamma_n 
       + b_n\gamma_n
       - b_n x s_{n-1}(x)\gamma_n
       + c_n\gamma_n\gamma_{n-1}
       - c_n x s_{n-2}(x)\gamma_n\gamma_{n-1},
   \\
   -x s_n(x) &=& 
       a_nx\gamma_n 
       -a_nx^2 s_{n-1}(x)\gamma_n 
       -b_n x s_{n-1}(x)\gamma_n
       -c_n x s_{n-2}(x)\gamma_n\gamma_{n-1},
   \\
   -x s_n(x) &=& 
       -x\gamma_n
       \left[
       (a_nx+b_n) s_{n-1}(x)
       + c_n  s_{n-2}(x)\gamma_{n-1}
       -a_n
       \right],
\end{EQ}
dividing by $-x$ recurrence~\eqref{eq:s:recurr} is retrived.
Polynomials $s_0$ and $s_1$ are trivially computed.
\end{proof}
Using Lemma~\ref{lem:preco:pre} the polynomial 
preconditioner~\eqref{eq:poly:ch}
satisfies the recurrence~\eqref{eq:s:recurr} with
\begin{EQ}\label{eq:preco:cheb}
   a'_0 = \dfrac{2}{1+\epsilon},\qquad
   a'_1 = \frac{-8}{\epsilon^2+6\epsilon+1},\qquad
   b'_1 = 8\frac{1+\epsilon}{\epsilon^2+6\epsilon+1},\\
   a'_n = \dfrac{4\gamma_n}{1-\epsilon}, \qquad
   b'_n = -2\gamma_n\dfrac{1+\epsilon}{1-\epsilon}, \qquad
   c'_n = -\gamma_n\gamma_{n-1},
\end{EQ}
where $c_n=-1$ is used and $\gamma_n=T_n^{\epsilon}(0)/T_{n+1}^{\epsilon}(0)$ 
is computed by solving recurrence~\eqref{eq:ch:recurr} for $x=0$, that is
\begin{EQ}
   T_n^{\epsilon}(0) = \frac{1}{2}\left(c^n+c^{-n}\right),
   \qquad
   c = \dfrac{\sqrt{\epsilon}-1}{\sqrt{\epsilon}+1},\qquad
   \Rightarrow\qquad
   \gamma_n = 
   \frac{T_n^{\epsilon}(0)}{T_{n+1}^{\epsilon}(0)}=
   \frac{c^n+c^{-n}}{c^{n+1}+c^{-(n+1)}},
\end{EQ}

Numerical stability of recurrence~\eqref{eq:preco:cheb}
is discusssed in section \ref{sec:stability}. The estimation of $\epsilon$ is the complex task and some authors perform it dynamically. As an alternative, the present approach is to move the eigenvalues of the coefficient matrix from the interval $[\epsilon,1]$ to a stripe $[1-\delta,1+\delta]$, so that the condition number remains bounded. The value of $\epsilon$ is not determined from the estimate of the eigenvalues, but from the degree of the preconditioner polynomial and from the amplitude of the stripe $\delta$. Once $\delta$ is fixed, the higher the degree of the preconditioner, the lower the value of $\epsilon$, which decreases to zero. Thus, if the degree of the preconditioner is high enough, the eigenvalues are moved in the interval $[1-\delta,1+\delta]$. The important fact is that even if the degree is not high enough to move the complete spectrum, the majority of the eigenvalues are moved in the desired stripe, improving the performance of the conjugate gradient method. An idea of this behaviour is showed in Figure~\ref{fig:preco:eigs} on the right. The end of this section is devoted to the explicit expression of the value of $\epsilon$ computed backwards from the value of  $\delta_n$:
once the maximum condition number is fixed, it is possible to increase the degree of the polynomial preconditioner so that $\epsilon$ decreases until the whole (or at least the most) spectrum of the matrix is contained in the specified range.

In the interval $[\epsilon,1]$, \cheb{} polynomial $T_n^{\epsilon}(x)/T_{n}^{\epsilon}(0)$
is bounded in the range $[-\delta,\delta]$ where
$\delta = T_n^{\epsilon}(0)^{-1}=2/(c^{n+1}+c^{-(n+1)})$ and solving for $\epsilon$  gives
\begin{EQ}
   \epsilon = \left(\frac{|c|-1}{|c|+1}\right)^2,\qquad
   |c| = \left(\frac{1+\sqrt{1-\delta^2}}{\delta}\right)^{\frac{1}{n+1}}.
\end{EQ}


\subsection{Jacobi polynomial preconditioner}\label{sec:jacobi}

The solution of minimization problem~\eqref{eq:minq}
with quality measure $\mathcal{Q}(s)$
is well known and can be written in terms of Jacobi orthogonal 
polynomials~\cite{Saad:1985}:

\begin{definition}
Given a nonnegative weight function $w(\lambda):[0,1]\mapsto\mathbbm{R}^+$ (see
\cite{Johnson:1983,Saad:1985,Ashby:1988}) the scalar product $\SCAL{\cdot}{\cdot}_w$ 
and the relative induced norm $\norm{\cdot}_w$ are defined as
\begin{EQ}
  \SCAL{p}{q}_w = \int_0^1 p(\lambda)q(\lambda)\,w(\lambda)\,\mathrm{d}\lambda,
  \qquad
  \norm{p}_w = \sqrt{\SCAL{p}{p}_w} = \sqrt{\int_0^1 p(\lambda)^2w(\lambda)\,\mathrm{d}\lambda}.
\end{EQ}
where $p$ and $q$ are continuous functions.
The orthogonal polynomials w.r.t. the scalar product $\SCAL{\cdot}{\cdot}_w$
are the polynomials $p_k(\lambda)$ which satisfy $\SCAL{p_k}{p_j}_w  = 0$ if $k\neq j$.
\end{definition}
The orthogonal polynomial w.r.t. to the weight
\begin{EQ}\label{eq:w:J}
   w^{\alpha,\beta}(\lambda) = (1-\lambda)^\alpha \lambda^\beta,
   \qquad\textrm{for $\alpha,\beta>-1$},
\end{EQ}
defined in the interval $[0,1]$, are the Jacobi polynomials and they satisfy the recurrence (see~\cite{Szego:1939}):
\begin{EQ}[rcl]
   p^{\alpha,\beta}_0(x)     &=& 1,\\
   p^{\alpha,\beta}_1(x)     &=& a^{\alpha,\beta}_0x + b^{\alpha,\beta}_0, \\
   p^{\alpha,\beta}_{n+1}(x) &=& (a^{\alpha,\beta}_nx+b^{\alpha,\beta}_n)p^{\alpha,\beta}_n(x) 
                              + c^{\alpha,\beta}_n p^{\alpha,\beta}_{n-1}(x),
\end{EQ}
for 
\begin{EQ}[rcl]
   a^{\alpha,\beta}_n   &=& 1,
   \\
   b^{\alpha,\beta}_n   &=& -\frac{1}{2}\left(1+\dfrac{\beta^2-\alpha^2}{(2n+\alpha+\beta)(2(n+1)+\alpha+\beta)}\right) ,
   \\[0.5em]
   c^{\alpha,\beta}_n    &=& -\dfrac{n(n+\alpha)(n+\beta)(n+\alpha+\beta)}
                        {(2n-1+\alpha+\beta)(2n+1+\alpha+\beta)(2n+\alpha+\beta)^2}.
\end{EQ}
The class of polynomials of the form $1-\lambda s(\lambda)$ with $s\in\Pi_{m-1}$
can be thought as polynomials $r\in \Pi_{m+1}$ with $r(0)=1$, thus,
the  minimization problem~\eqref{eq:minq} for $\mathcal{Q}(s)$ can be recast to 
the following constrained minimization for
$w(\lambda)\equiv 1$:
\begin{EQ}\label{eq:prob}
  r_{m+1} = \mathop{\textrm{argmin}}_{r\in\Pi_{m+1}, r(0)=1} \norm{r}_w.
\end{EQ}
The preconditioner polynomial is $s_{m}(\lambda) = \lambda^{-1}(1-r_{m+1}(\lambda))$.
Polynomial $r_{m+1}(\lambda)$ is expanded by means of Jacobi orthogonal polynomials 
with $\alpha=\beta=0$:
\begin{EQ}
  r_{m+1}(\lambda) = \sum_{k=0}^{m+1}\alpha_k p_k^{0,0}(\lambda)
\end{EQ}
Making use of the property of orthogonality 
w.r.t. the scalar product, one has
\begin{EQ}
   \norm{r_{m+1}}_w^2 = \sum_{k=0}^{m+1} \alpha_k^2\norm{p_k^{0,0}}_w^2,
   \qquad
   1=r_{m+1}(0) = \sum_{k=0}^{m+1}\alpha_k p_k^{0,0}(0),
\end{EQ}
thus the constrained minimum problem~\eqref{eq:prob} is recast as
\begin{EQ}\label{eq:min:C}
   \textrm{minimize}   \qquad \sum_{k=0}^{m+1} \alpha_k^2\norm{p_k^{0,0}}_w^2, \qquad\qquad
   \textrm{subject to} \qquad \sum_{k=0}^{m+1} \alpha_k p_k^{0,0}(0) = 1.
\end{EQ}
Problem~\eqref{eq:min:C} is solved by using Lagrange multiplier
with first order conditions resulting in
\begin{EQ}\label{eq:minimo}
   \alpha_k = 
   {\dfrac{p_k(0)}{\norm{p_k^{0,0}}_w^2}}\Bigg/
   {\displaystyle\sum_{k=0}^{m+1}\dfrac{p_k^{0,0}(0)^2}{\norm{p_k^{0,0}}_w^2}},
   \qquad
   \Rightarrow
   \qquad
   r_{m+1}(\lambda) = 
   {\displaystyle\sum_{k=0}^{m+1}
   \dfrac{p^{0,0}_k(0)p^{0,0}_k(\lambda)}{\norm{p^{0,0}_k}_w^2}}\Bigg/
   {\displaystyle\sum_{k=0}^{m+1}\dfrac{p_k^{0,0}(0)^2}{\norm{p_k^{0,0}}_w^2}}.   
\end{EQ}
\begin{remark}
The solution~\eqref{eq:minimo} is only formal but barely useful from a computational point of view, because it requires to compute explicitly the least squares polynomial. In facts, it is well known that the evaluation of a polynomial of high degree is a very unstable  process. To make solution \eqref{eq:minimo} practical, it is mandatory to obtain a stable recurrence formula that allows to evaluate polynomials even of very high degree, e.g. $1000$ or more.
\end{remark}

To find a recurrence for~\eqref{eq:minimo}, it must be rewritten as a ratio
of orthogonal polynomials as for the \cheb{} preconditioner~\eqref{eq:poly:ch}.
To this scope, some classical theorems and definitions on orthogonal polynomials 
are here recalled for convenience.
Christoffel--Darboux formulas and Kernel Polynomials, here recalled
without proofs (see~\cite{Szego:1939,stiefel:1958}), are used to build the recurrence.
\begin{theorem}[Christoffel--Darboux formulas]\label{teo:CD}
  Orthogonal polynomials w.r.t. the scalar product $\SCAL{\cdot}{\cdot}_{w}$
  share the following identities,
  \begin{EQ}\label{eq:cd}
     \sum_{j=0}^k 
     \dfrac{p_j(x)p_j(y)}{\norm{p_j}_w^2} = 
     \dfrac{1}{\norm{p_k}_w^2}\,\,
     \dfrac{p_{k+1}(x)p_k(y)-p_{k+1}(y)p_k(x)}{x-y}\,.
  \end{EQ}
\end{theorem}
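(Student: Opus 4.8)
\emph{Proof plan.} The plan is to derive \eqref{eq:cd} directly from the three-term recurrence \eqref{eq:Q:orto}, which every orthogonal family $\{p_j\}$ satisfies, in the classical style of the Christoffel--Darboux argument. First I would write the recurrence once with argument $x$ and once with argument $y$, multiply the former by $p_n(y)$, the latter by $p_n(x)$, and subtract. Setting $F_n(x,y)\DEF p_{n+1}(x)p_n(y)-p_{n+1}(y)p_n(x)$, the symmetric $b_n p_n(x)p_n(y)$ terms cancel and, using $p_{n-1}(x)p_n(y)-p_{n-1}(y)p_n(x)=-F_{n-1}(x,y)$, one is left with
\begin{EQ}
  F_n(x,y)=a_n\,(x-y)\,p_n(x)p_n(y)-c_n\,F_{n-1}(x,y),\qquad n\geq 1 .
\end{EQ}
Dividing by $x-y$ makes this a first-order recurrence for $F_n(x,y)/(x-y)$, so the identity will follow once the coefficients $a_n$ and $c_n$ are related to the norms $\norm{p_n}_w$.

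Supplying that relation is the second step. Taking the $\SCAL{\cdot}{\cdot}_w$-inner product of \eqref{eq:Q:orto} with $p_{n-1}$, orthogonality kills the $b_n$-term and gives $0=a_n\SCAL{x\,p_{n-1}}{p_n}_w+c_n\norm{p_{n-1}}_w^2$. Since $x\,p_{n-1}(x)$ and $p_n(x)$ have the same degree, their leading coefficients are in a fixed ratio that \eqref{eq:Q:orto} pins down to $1/a_{n-1}$, so by orthogonality $\SCAL{x\,p_{n-1}}{p_n}_w=a_{n-1}^{-1}\norm{p_n}_w^2$, and therefore
\begin{EQ}
  c_n=-\frac{a_n}{a_{n-1}}\,\frac{\norm{p_n}_w^2}{\norm{p_{n-1}}_w^2}.
\end{EQ}
For the families that matter here the recurrence is normalised so that all leading coefficients coincide, i.e.\ $a_n\equiv 1$ (this holds for the Jacobi polynomials $p^{0,0}_n$ used in this section), whence $c_n=-\norm{p_n}_w^2/\norm{p_{n-1}}_w^2$; a different normalisation would merely introduce an extra factor $a_k$ on the right of \eqref{eq:cd}.

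Inserting this value of $c_n$ into the recurrence for $F_n/(x-y)$ and dividing through by $\norm{p_n}_w^2$ yields a telescoping identity,
\begin{EQ}
  \frac{F_n(x,y)}{\norm{p_n}_w^2\,(x-y)}=\frac{p_n(x)p_n(y)}{\norm{p_n}_w^2}+\frac{F_{n-1}(x,y)}{\norm{p_{n-1}}_w^2\,(x-y)} .
\end{EQ}
It remains to check the base case: with $p_0\equiv 1$ and $p_1(x)=a_0x+b_0$ one gets $F_0(x,y)=a_0(x-y)$, so $F_0(x,y)/(\norm{p_0}_w^2(x-y))=1/\norm{p_0}_w^2=p_0(x)p_0(y)/\norm{p_0}_w^2$. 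Summing the telescoping relation from $1$ to $k$ collapses the right-hand side to $\sum_{j=0}^k p_j(x)p_j(y)/\norm{p_j}_w^2$ and leaves $F_k(x,y)/(\norm{p_k}_w^2(x-y))$ on the left, which is exactly \eqref{eq:cd}; the confluent form at $y=x$, should the kernel-polynomial construction require it, comes from letting $y\to x$. The only step that needs genuine care rather than bookkeeping is the identity $c_n=-\norm{p_n}_w^2/\norm{p_{n-1}}_w^2$, since it hinges on tracking leading coefficients through the recurrence and on matching the normalisation implicit in \eqref{eq:cd}.
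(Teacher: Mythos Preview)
Your argument is correct and is precisely the classical derivation of the Christoffel--Darboux identity; the paper itself does not prove the theorem at all, explicitly recalling it ``without proofs'' and referring to Szeg\H{o} and Stiefel. So there is nothing in the paper to compare against, and what you wrote is essentially the textbook proof one would find in those references.

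One small point worth flagging: as you correctly observe, the identity \eqref{eq:cd} as written in the paper tacitly assumes the normalisation $a_n\equiv 1$ in the three-term recurrence, otherwise a factor $1/a_k$ appears on the right-hand side. Your telescoping computation makes this transparent, and you rightly note that the Jacobi family $p_n^{0,1}$ actually used downstream (equations \eqref{eq:Q:coeff} and \eqref{eq:ratio}) has $a_n^{0,1}=1$, so the formula applies verbatim where the paper needs it. The base case and the leading-coefficient bookkeeping for $c_n=-\norm{p_n}_w^2/\norm{p_{n-1}}_w^2$ are handled cleanly.
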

\begin{theorem}[Kernel Polynomials]\label{teo:ker}
  Given orthogonal polynomials $p_k(x)$ w.r.t. 
  the scalar product $\SCAL{\cdot}{\cdot}_{w}$, i.e. w.r.t
  weight function $w(x)$, then the polynomials
  \begin{EQ}
     q_k(x) = 
     \big(p_{k+1}(x)p_k(0)-p_{k+1}(0)p_k(x)\big)/x,
  \end{EQ}
  are orthogonal polynomials w.r.t. the scalar product $\KSCAL{\cdot}{\cdot}_{w}$
  defined as 
  $\KSCAL{p}{q}_{w}=\int_0^1 p(x)q(x)\,x w(x)\,\mathrm{d}x,$
  i.e. w.r.t the weight function $x w(x)$.
  Moreover $q_0(x)=1$.
\end{theorem}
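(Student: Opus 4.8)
The plan is to establish the orthogonality relations $\KSCAL{q_k}{q_l}_w=0$ for $k\neq l$ directly from the definition of the $q_k$, using only that the $p_k$ are orthogonal with respect to $\SCAL{\cdot}{\cdot}_w$; the Christoffel--Darboux formula~\eqref{eq:cd} of Theorem~\ref{teo:CD} enters only incidentally, to recognise $q_k$ as the kernel polynomial. The observation that carries the whole argument is obtained by multiplying the definition of $q_k$ through by $x$:
\[
   x\,q_k(x)=p_k(0)\,p_{k+1}(x)-p_{k+1}(0)\,p_k(x),
\]
so $x\,q_k$ is an explicit linear combination of $p_{k+1}$ and $p_k$. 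In particular the right-hand side vanishes at $x=0$, which confirms that $q_k$ is genuinely a polynomial, of degree at most $k$; its degree is in fact exactly $k$, since the leading term $p_k(0)\,p_{k+1}(x)$ does not cancel — here one uses $p_k(0)\neq 0$, which holds because the zeros of an orthogonal polynomial for a positive weight on $[0,1]$ all lie in the open interval $(0,1)$.

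Next I would compute, for $k\neq l$ and assuming $l<k$ without loss of generality (the scalar product $\KSCAL{\cdot}{\cdot}_w$ being symmetric),
\[
   \KSCAL{q_k}{q_l}_w=\int_0^1\big(x\,q_k(x)\big)\,q_l(x)\,w(x)\,\mathrm{d}x
   =p_k(0)\,\SCAL{p_{k+1}}{q_l}_w-p_{k+1}(0)\,\SCAL{p_k}{q_l}_w.
\]
The key point is that the extra factor $x$ carried by the modified weight $x\,w(x)$ is exactly what absorbs the denominator hidden in $q_k$ and turns the $\KSCAL{\cdot}{\cdot}_w$-product into ordinary $\SCAL{\cdot}{\cdot}_w$-products of $q_l$ against polynomials of degree $k$ and $k+1$. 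Since $\deg q_l\le l<k$, both $\SCAL{p_{k+1}}{q_l}_w$ and $\SCAL{p_k}{q_l}_w$ vanish by the defining orthogonality of $p_k$ and $p_{k+1}$ to every polynomial of strictly lower degree, so $\KSCAL{q_k}{q_l}_w=0$, as claimed.

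Finally I would settle the normalization $q_0(x)=1$: with the $p_k$ taken monic ($p_0(x)=1$, $p_1(x)=x+b_0$), which is the convention under which~\eqref{eq:cd} holds in the stated form, one gets $q_0(x)=\big((x+b_0)-b_0\big)/x=1$; I would also record, by putting $y=0$ in~\eqref{eq:cd}, the equivalent representation $q_k(x)=\norm{p_k}_w^2\sum_{j=0}^k p_j(0)p_j(x)/\norm{p_j}_w^2$, which exhibits $q_k$ as a scalar multiple of the reproducing kernel evaluated at $0$. I do not expect a genuine obstacle: the proof reduces to the single identity $x\,q_k(x)=p_k(0)p_{k+1}(x)-p_{k+1}(0)p_k(x)$ together with orthogonality. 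The only points that need a moment's care are verifying that $\deg q_k=k$ exactly — so that the $q_k$ form a complete orthogonal sequence for the weight $x\,w(x)$ rather than merely a subfamily — and keeping the monic normalization consistent so that $q_0=1$ emerges correctly.
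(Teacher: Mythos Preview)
Your argument is correct and is the standard one: the identity $x\,q_k(x)=p_k(0)p_{k+1}(x)-p_{k+1}(0)p_k(x)$ immediately reduces $\KSCAL{q_k}{q_l}_w$ to two $\SCAL{\cdot}{\cdot}_w$-products of $q_l$ against $p_k$ and $p_{k+1}$, both of which vanish for $l<k$ by degree counting. The paper, however, does not actually prove this theorem --- it explicitly states that Theorems~\ref{teo:CD} and~\ref{teo:ker} are ``here recalled without proofs'' and refers to Szeg\H{o} and Stiefel for the argument --- so there is nothing to compare against; what you have written is precisely the classical proof one finds in those references.
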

With the formulas of Christoffel--Darboux~\eqref{eq:cd}
and $x=\lambda$, $y=0$, 
it is possible to rewrite~\eqref{eq:minimo} as
\begin{EQ}\label{eq:sol:pre:pre}
   r_{m+1}(\lambda) = 
   \frac{1}{C}\dfrac{p_{m+2}^{0,0}(\lambda)p_{m+1}^{0,0}(0)-p_{m+2}^{0,0}(0)p_{m+1}^{0,0}(\lambda)}{\lambda},
   \qquad
   C = \norm{p_{m+1}^{0,0}}_w^2\sum_{k=0}^{m+1}\dfrac{p_k^{0,0}(0)^2}{\norm{p_k^{0,0}}_w^2}.
\end{EQ}
Using the Kernel Polynomials of this last Theorem,
expression~\eqref{eq:sol:pre:pre} becomes
\begin{EQ}\label{eq:ratio}
   r_{m+1}(\lambda) = 
   \frac{p_{m+1}^{0,1}(\lambda)}{p_{m+1}^{0,1}(0)},
\end{EQ}
where $p_{k}^{0,1}(x)$ are orthogonal polynomials w.r.t. the weight $w(\lambda)=\lambda$.
In fact, the Kernel Polynomials w.r.t.  $\lambda w^{\alpha,\beta}(\lambda) = w^{\alpha,\beta+1}(\lambda)$
satisfy $q^{\alpha,\beta}(x)=p^{\alpha,\beta+1}(x)$.

Preconditioner polynomial can be computed recursively using Lemma~\ref{lem:preco:pre},
where coefficients $a'_n$, $b'_n$, $c'_n$ and $\gamma_n$ 
are computed from  $a^{0,1}_n$, $b^{0,1}_n$, $c^{0,1}_n$. Given
\begin{EQ}\label{eq:Q:coeff}
  a^{0,1}_n = 1,\qquad
  b^{0,1}_n = -\frac{1}{2}\left(1+\dfrac{1}{(2n+1)(2n+3)}\right),\qquad
  c^{0,1}_n = -\dfrac{n(n+1)}{4(2n+1)^2},
\end{EQ}
the values of
$a'_n$, $b'_n$, $c'_n$ and $\gamma_n$ from Lemma~\ref{lem:preco:pre} 
become:
\begin{EQ}\label{eq:abc}
  a'_0 = \frac{3}{2},\qquad
  a'_1 = -\frac{10}{3},\qquad
  b'_1 = 4,
  \\
   \gamma_n = a'_n = -4+\frac{2(3n+5)}{(n+2)^2}, \qquad
   b'_n = 2-\Delta, \qquad
   c'_n = -1+\Delta,\qquad
   \Delta = \frac{2(3n^2+6n+2)}{(2n+1)(n+2)^2}.
\end{EQ}
The only difficulty of the previous coefficients computation lies in the recursive solution of $\gamma_n$, which is here omitted for conciseness but that is a linear three term recurrence with polynomial coefficients. Notice that $\Delta\to 0$, thus the limit value of the above coefficients is evident.

\subsection{Recurrence formula for the preconditioner}\label{sec:recurrence}
Looking at Algorithms~\ref{algo:COCG} and~\ref{algo:COCR}, the
polynomial preconditioner $s_m(\bm{M})$ is 
applied to a vector, i.e. $s_m(\bm{M})\bm{v}$.
Thus, to avoid matrix-matrix multiplication, by defining 
$\bm{v}^{(k)}=s_k(\bm{M})\bm{v}$ and using Lemma~\ref{lem:preco:pre}
with~\eqref{eq:abc},
the following recurrence is obtained for $s_m(\bm{M})\bm{v}=\bm{v}^{(m)}$.
\begin{EQ}[c]\label{eq:preco:recurr}
  \bm{v}^{(0)} 
  = a'_0\bm{v},\qquad
  \bm{v}^{(1)}
  =
  a'_1 \bm{M}\bm{v} + b'_1 \bm{v},
  \qquad
  \bm{v}^{(n)} = a'_n(\bm{M}\bm{v}^{(n-1)}-\bm{v})+b'_n\bm{v}^{(n-1)}
      +c'_n \bm{v}^{(n-2)},
\end{EQ}
where $n=2,3,\ldots,m$ and recurrence~\eqref{eq:preco:recurr} is the proposed 
preconditioner with coefficients given by~\eqref{eq:preco:cheb} for 
\cheb{} and~\eqref{eq:abc} for Jacobi the polynomials.
Equation~\eqref{eq:preco:recurr} yields
Algorithm~\ref{alg:2}.

\begin{center}
\begin{minipage}{0.9\columnwidth}
\begin{algorithm}[H]
  $\bm{t}_1 \assign a'_0\bm{v}$; \quad
  $\bm{y} \assign a'_1\bm{M}\bm{v}+b'_1\bm{v}$\;
  \For{$n=2,3,\ldots,m$}{
     $\bm{t}_0 \assign \bm{t}_1$;\quad
     $\bm{t}_1 \assign \bm{y}$;\quad
    $\bm{y}   \assign a'_n(\bm{M}\bm{t}_1-\bm{v})+b'_n\,\bm{t}_1+c'_n\,\bm{t}_0$\;
  }
  \Return{$\bm{y}$}\;
  \caption{Application of preconditioner $s_m(\bm{M})$ to a vector $\bm{v}$.}
  \label{alg:2}
\end{algorithm}
\end{minipage}
\end{center}

\section{Numerical stability}\label{sec:stability}
Algorithm~\ref{alg:2}, i.e. the application of preconditioner 
$s_m(\bm{M})$ given by equation~\eqref{eq:preco:recurr} to a vector $\bm{v}$, also 
taking into account rounding errors, results in
\begin{EQ}
  \bm{w}^{(0)} = a'_0\bm{v}+\bm{\varrho}^{(0)}, \qquad
  \bm{w}^{(1)} = a'_1\bm{M}\bm{v}+b'_1\bm{v}+\bm{\varrho}^{(1)}, \\
  \bm{w}^{(n)} = a'_n(\bm{M}\bm{w}^{(n-1)}-\bm{v})+
                 b'_n\bm{w}^{(n-1)}+
                 c'_n\bm{w}^{(n-2)}+
                 \bm{\varrho}^{(n)},
\end{EQ}
where $\norm{\bm{\varrho}^{(k)}}_{\infty} \leq \delta$ 
are the errors due to floating point operations
with $\delta$ as an upper bound of such errors.
The cumulative error $\bm{e}^{(k)}=\bm{w}^{(k)}-\bm{v}^{(k)}$
satisfies the linear recurrence
\begin{EQ}\label{eq:recurr:err}
  \bm{e}^{(0)} = \bm{\varrho}^{(0)}, \qquad
  \bm{e}^{(1)} = \bm{\varrho}^{(1)}, \qquad
  \bm{e}^{(n)} = a'_n\bm{M}\bm{e}^{(n-1)}+
                 b'_n\bm{e}^{(n-1)}+
                 c'_n\bm{e}^{(n-2)}+
                 \bm{\varrho}^{(n)}.
\end{EQ}
The next definitions introduce the concept of generalized and joint spectral radius needed for the proof of the theorem of the matrix bound, they can be found in~\cite{Jungers:2009}.

\begin{definition}
  A matrix set $\Sigma=\{\bm{A}_k\in\mathbbm{R}^{n\times n}|k\in\mathbbm{N}\}$ is \emph{bounded}
  if there is a constant $C$ such that 
  $\norm{\bm{A}}\leq C$ for all $\bm{A}\in\Sigma$.
  An \emph{invariant} subspace $V$ for $\Sigma$ is a vector space 
  such that $\bm{A}V\subseteq V$ for all $\bm{A}\in\Sigma$.
  The set $\Sigma$ is irreducible if the only invariant subspace
  are $\{\bm{0}\}$ or $\mathbbm{R}^n$.
\end{definition}

\begin{definition}
  The \emph{generalized spectral radius}  $\varrho(\Sigma)$ and 
  the \emph{joint spectral radius} $\hat\varrho(\Sigma,\norm{\cdot})$ of any set of matrices 
  $\Sigma$ are defined as
  \begin{EQ}[rclrcl]
     \varrho(\Sigma) &=& \limsup_{k\to\infty} (\varrho_k(\Sigma))^{1/k},
     \qquad&
     \varrho_k(\Sigma) &=&
     \sup\left\{
       \varrho\left(\prod_{i=1}^k\bm{A}_i\right)\,\Big|\,
       \bm{A}_i\in\Sigma
     \right\}
     \\
     \hat\varrho(\Sigma,\norm{\cdot}) &=& \limsup_{k\to\infty} (\hat\varrho_k(\Sigma,\norm{\cdot}))^{1/k},
     \qquad&
     \hat\varrho_k(\Sigma,\norm{\cdot}) &=&
     \sup\left\{
       \norm{\prod_{i=1}^k\bm{A}_i}\,\Big|\,
       \bm{A}_i\in\Sigma
     \right\}
  \end{EQ}
\end{definition}

\begin{theorem}\label{teo:matrix:bound}
Let $\Sigma$ be a bounded and irreducible set of matrices with $\varrho(\Sigma)>0$,
then there is a constant $C$ such that
\begin{EQ}
   \norm{\bm{A}_1\bm{A}_2\cdots \bm{A}_k} \leq C\varrho(\Sigma)^k,\qquad
   \forall \bm{A}_j\in\Sigma
\end{EQ}
for all $k>0$.
\end{theorem}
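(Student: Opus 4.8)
The plan is to reduce the statement, after a harmless rescaling, to the boundedness of the multiplicative semigroup generated by $\Sigma$, which in turn is the statement that $\Sigma$ admits an \emph{extremal norm}. Since $\varrho(\Sigma)>0$, put $\widehat\Sigma\DEF\varrho(\Sigma)^{-1}\Sigma$; then $\widehat\Sigma$ is again bounded and irreducible (multiplying a family by a nonzero scalar changes none of its invariant subspaces), and $\varrho(\widehat\Sigma)=\varrho(\Sigma)^{-1}\varrho(\Sigma)=1$ because $\varrho_k$ is homogeneous of degree $k$. Writing $\bm{A}_i=\varrho(\Sigma)\,\widehat{\bm{A}}_i$ with $\widehat{\bm{A}}_i\in\widehat\Sigma$, the target inequality $\norm{\bm{A}_1\cdots\bm{A}_k}\le C\,\varrho(\Sigma)^k$ becomes exactly $\norm{\widehat{\bm{A}}_1\cdots\widehat{\bm{A}}_k}\le C$ for every finite product, i.e. the semigroup $\mathcal{S}$ generated by $\widehat\Sigma\cup\{\bm{I}\}$ must be a bounded subset of $\mathbbm{R}^{n\times n}$.

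To produce such a bound I would pass to the extremal norm attached to $\mathcal{S}$. If $\mathcal{S}$ is bounded, then
\begin{EQ}
  \norm{\bm{x}}_* \DEF \sup_{\bm{T}\in\mathcal{S}}\norm{\bm{T}\bm{x}}
\end{EQ}
is finite, defines a norm on $\mathbbm{R}^{n}$ (positivity follows from $\bm{I}\in\mathcal{S}$, while homogeneity and the triangle inequality are immediate), and it is extremal for $\widehat\Sigma$: for $\bm{A}\in\widehat\Sigma$ one has $\bm{T}\bm{A}\in\mathcal{S}$, hence
\begin{EQ}
  \norm{\bm{A}\bm{x}}_*=\sup_{\bm{T}\in\mathcal{S}}\norm{\bm{T}\bm{A}\bm{x}}\le\sup_{\bm{T}'\in\mathcal{S}}\norm{\bm{T}'\bm{x}}=\norm{\bm{x}}_*,
\end{EQ}
so the induced operator norm satisfies $\norm{\bm{A}}_*\le1$ for all $\bm{A}\in\widehat\Sigma$. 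The substance of the theorem is therefore the finiteness of $\norm{\cdot}_*$, equivalently the boundedness of $\mathcal{S}$; this is precisely the classical existence theorem for extremal norms of bounded irreducible families, which I would invoke from~\cite{Jungers:2009}. Its proof runs by contradiction: if $\mathcal{S}$ were unbounded, pick products $\bm{T}_j\in\mathcal{S}$ with $\norm{\bm{T}_j}\to\infty$; the normalized matrices $\bm{T}_j/\norm{\bm{T}_j}$ lie on the (compact) unit sphere of $\mathbbm{R}^{n\times n}$, so a subsequence converges to some $\bm{U}\neq\bm{0}$, and one checks that the span of the ranges of all such limit matrices is a nonzero subspace invariant under every element of $\widehat\Sigma$. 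Irreducibility forbids it from being proper, while the Berger--Wang identity $\varrho(\widehat\Sigma)=\hat\varrho(\widehat\Sigma)=1$ — valid because $\widehat\Sigma$ is \emph{bounded} — forbids it from being all of $\mathbbm{R}^{n}$, a contradiction. This invariant-subspace argument is the only genuinely delicate point, and it is where both hypotheses, boundedness and irreducibility, enter.

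Granting the extremal norm, the conclusion is routine bookkeeping. From $\norm{\widehat{\bm{A}}_i}_*\le1$ and submultiplicativity of the operator norm, $\norm{\widehat{\bm{A}}_1\cdots\widehat{\bm{A}}_k}_*\le1$ for every $k\ge1$. All norms on the finite-dimensional space $\mathbbm{R}^{n\times n}$ are equivalent, so there is a constant $C>0$ with $\norm{\bm{M}}\le C\norm{\bm{M}}_*$ for all $\bm{M}$. Hence $\norm{\widehat{\bm{A}}_1\cdots\widehat{\bm{A}}_k}\le C$, and undoing the rescaling gives $\norm{\bm{A}_1\cdots\bm{A}_k}=\varrho(\Sigma)^k\,\norm{\widehat{\bm{A}}_1\cdots\widehat{\bm{A}}_k}\le C\,\varrho(\Sigma)^k$ for all $k>0$, which is the assertion. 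I expect the boundedness of $\mathcal{S}$ in the second paragraph to be the sole obstacle; the normalization and the norm-equivalence step are elementary.
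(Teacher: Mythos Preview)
Your proposal is correct and follows essentially the same route as the paper: the paper's proof is a one-line citation (``It is theorem~2.1 by~\cite{Jungers:2009} with a slight modification''), and you likewise defer the substantive step---boundedness of the rescaled semigroup, i.e.\ existence of an extremal norm---to the same monograph, while supplying the rescaling and norm-equivalence scaffolding that the paper leaves implicit. Your sketch of the invariant-subspace contradiction is a bit loose, but since you explicitly invoke Jungers for that step rather than claim to prove it, there is no gap.
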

\begin{proof}
It is theorem 2.1 by~\cite{Jungers:2009} with a slight modification to match the present case.
\end{proof}
\begin{theorem}
  Recurrence~\eqref{eq:recurr:err} satisfies
  \begin{EQ}
    \norm{\bm{e}^{(n)}}_{\infty} \leq \left(C\delta N\right)n,
  \end{EQ}
  where $N$ is the dimension of the linear system, $\delta$ is the amplitude of the stripe for the eigenvalues and $C$ is an unknown constant coming from the norm inequalities which is found experimentally to be small.
\end{theorem}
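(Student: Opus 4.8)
The plan is to recast the error recurrence \eqref{eq:recurr:err} in first-order form and then unroll it. Put $\bm{E}^{(n)}=\bigl(\bm{e}^{(n)};\bm{e}^{(n-1)}\bigr)$ and introduce the transition matrices
\begin{EQ}\label{eq:planT}
  \bm{T}_n=\begin{pmatrix} a'_n\bm{M}+b'_n\bm{I} & c'_n\bm{I}\\ \bm{I} & \bm{0}\end{pmatrix},
\end{EQ}
so that $\bm{E}^{(n)}=\bm{T}_n\bm{E}^{(n-1)}+\bigl(\bm{\varrho}^{(n)};\bm{0}\bigr)$ for $n\geq2$, with $\bm{E}^{(1)}=\bigl(\bm{\varrho}^{(1)};\bm{\varrho}^{(0)}\bigr)$. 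Unrolling (with the convention that an empty product of $\bm{T}_k$'s equals $\bm{I}$) writes $\bm{e}^{(n)}$ as a sum of at most $n+1$ terms, each a product of consecutive transition matrices applied to one of the rounding vectors $\bm{\varrho}^{(k)}$. Since $\norm{\bm{\varrho}^{(k)}}_\infty\leq\delta$, the whole theorem collapses to one claim: \emph{the products $\bm{T}_n\bm{T}_{n-1}\cdots\bm{T}_k$ are bounded by a constant that does not depend on $n$ or $k$}.

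To establish that, I would diagonalise $\bm{M}=\bm{U}\bm{\Lambda}\bm{U}^{T}$ (orthogonal, with eigenvalues $\lambda\in(0,1]$ by Assumption~\ref{ass:1}), which block-diagonalises each $\bm{T}_n$ into the $2\times2$ blocks $\bm{T}_n(\lambda)=\bigl(\begin{smallmatrix} a'_n\lambda+b'_n & c'_n\\ 1 & 0\end{smallmatrix}\bigr)$, one per eigenvalue. Fix $\lambda$ and set $\Sigma_\lambda=\{\bm{T}_n(\lambda):n\geq2\}\cup\{\bm{T}_\infty(\lambda)\}$. This set is \emph{bounded}, because $a'_n,b'_n,c'_n$ converge as $n\to\infty$ ($\Delta\to0$ in \eqref{eq:abc} for the Jacobi case; the $\gamma_n$ converge in the \cheb{} case). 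It is \emph{irreducible}, since a common eigenvector $(\mu,1)$ would require $(a'_n\lambda+b'_n)\mu+c'_n=\mu^2$ for every $n$, which is impossible because $(a'_n\lambda+b'_n,c'_n)$ is not constant in $n$. Finally $\varrho(\Sigma_\lambda)\leq1$: by Lemma~\ref{lem:preco:pre} the homogeneous three-term recurrence is solved by the normalised polynomials $r_n(\lambda)$, which for \cheb{} are $T_n^{\epsilon}(\lambda)/T_n^{\epsilon}(0)$ — bounded by $1$ on $[\epsilon,1]\supseteq\sigma(\bm{M})$ by the choice of $\epsilon$ — and for Jacobi are the normalised kernel polynomials $p_n^{0,1}(\lambda)/p_n^{0,1}(0)$, uniformly bounded on $[0,1]$; boundedness of the two linearly independent solutions forbids exponential growth of the partial products, hence $\varrho(\Sigma_\lambda)\leq1$. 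Then Theorem~\ref{teo:matrix:bound} applies to each $\Sigma_\lambda$ and yields $\norm{\bm{T}_n(\lambda)\cdots\bm{T}_k(\lambda)}\leq C\,\varrho(\Sigma_\lambda)^{\,n-k+1}\leq C$.

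Feeding this into the unrolled recursion, each of the $\leq n+1$ summands is bounded by $C\delta$ once one accounts, through $\norm{\bm{v}}_2\leq\sqrt{N}\,\norm{\bm{v}}_\infty$ and the back-transformation by $\bm{U}$, for a dimensional factor crudely bounded by $N$. Hence $\norm{\bm{e}^{(n)}}_\infty\leq C\delta N\,(n+1)$, and absorbing the $+1$ into the constant gives the stated $\norm{\bm{e}^{(n)}}_\infty\leq(C\delta N)\,n$.

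The main obstacle is the \emph{uniformity in $\lambda$} of the constant delivered by Theorem~\ref{teo:matrix:bound}: at $\lambda=1$, and as $\lambda\to0^{+}$, the two characteristic roots of $\bm{T}_\infty(\lambda)$ coalesce on the unit circle into a non-trivial Jordan block of the form $\bigl(\begin{smallmatrix}-2&-1\\1&0\end{smallmatrix}\bigr)$, so the per-$\lambda$ bound degenerates exactly there; the same near-nilpotent behaviour also shows up whenever part of the spectrum slips below $\epsilon$. I would handle this by splitting $\sigma(\bm{M})$ into a compact interior portion — where $\bm{T}_\infty(\lambda)$ has distinct roots on the unit circle and the products are genuinely bounded — and a shrinking neighbourhood of $\{0,1\}$, where one bounds $\prod\bm{T}_{n_i}(\lambda)$ directly from the $\bm{I}+\bm{N}$ structure to obtain at most linear growth in the number of factors, and then recombines the two estimates. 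This is precisely the step where the argument trades an explicit constant for the "experimentally small'' constant $C$ of the statement, and it is also where the unavoidable linear factor in $n$ enters — a factor that merges harmlessly with the linear accumulation of the $n$ rounding terms already present in the unrolled sum.
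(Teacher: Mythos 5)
Your proof follows essentially the same route as the paper's: orthogonal diagonalisation of $\bm{M}$ reduces the error recurrence to per-eigenvalue $2\times2$ companion-matrix products, Theorem~\ref{teo:matrix:bound} bounds those products uniformly, and the two $\sqrt{N}$ norm conversions together with the sum of $n$ rounding terms produce the factor $Nn$. The only point of divergence is the justification of the joint-spectral-radius bound: the paper simply asserts that $\varrho(\bm{A}_n)<1$ for each $n$ (Lemma~\ref{lem:boubd:z}) and concludes the joint spectral radius is below one, whereas you argue $\varrho(\Sigma_\lambda)\leq 1$ from boundedness of the normalised polynomial solutions and explicitly flag the degeneracy near $\lambda\in\{0,1\}$ --- a more defensible version of the same step, since individual spectral radii below one do not in general control the joint spectral radius.
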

\begin{proof}
The matrix $\bm{M}=\bm{B}+\bm{C}$ in~\eqref{eq:recurr:err}, 
by Assumption~\ref{ass:1} 
is SPD with eigenvalues in $(0,1]$. Thus
$\bm{M}=\bm{T}^T\bm{\Lambda}\bm{T}$, with $\bm{T}$
orthogonal, i.e. $\bm{T}^T\bm{T}=\bm{I}$ and $\bm{\Lambda}$ diagonal. Multiplying
on the left the recurrence~\eqref{eq:recurr:err} by $\bm{T}$,
the following error estimate is obtained,
\begin{EQ}
  \bm{T}\bm{e}^{(0)} = \bm{T}\bm{\varrho}^{(0)}, \quad
  \bm{T}\bm{e}^{(1)} = \bm{T}\bm{\varrho}^{(1)}, \quad
  \bm{T}\bm{e}^{(n)} = a'_n\bm{\Lambda}\bm{T}\bm{e}^{(n-1)}+
                       b'_n\bm{T}\bm{e}^{(n-1)}+
                       c'_n\gamma_{n-1}\bm{T}\bm{e}^{(n-2)}
                       +\bm{T}\bm{\varrho}^{(n)}.
\end{EQ}
Focusing on $j^{\mathrm{th}}$ component of the transformed error, $f^{(n)} = (\bm{T}\bm{e}^{(n)})_j$ and
$\eta^{(n)}= (\bm{T}\bm{\varrho}^{(n)})_j$,
a scalar recurrence is obtained:
\begin{EQ}\label{eq:recurr:err2}
  f^{(0)} = \eta^{(0)}, \qquad
  f^{(1)} = \eta^{(1)}, \qquad
  f^{(n)} = (a'_n\lambda_j+b'_n)f^{(n-1)}+c'_nf^{(n-2)}+\eta^{(n)}.
\end{EQ}
Recurrence~\eqref{eq:recurr:err2} is restated in matrix form as
\begin{EQ}\label{eq:recurr:A}
   \bm{f}_n = \bm{A}_n\bm{f}_{n-1}+\bm{b}_n,\qquad
   \bm{A}_n = \pmatrix{
      a'_n\lambda_j+b'_n & c'_n \\
      1                  & 0
   },
   \quad
   \bm{b}_n = \pmatrix{
   \eta^{(n)} \\ 0
   },
   \quad
   \bm{f}_n = \pmatrix{
   f^{(n)} \\ f^{(n-1)}
   },
\end{EQ}
with initial data $\bm{f}_1^T=(\eta^{(1)},\eta^{(0)})$.
Notice that $\eta^{(n)}$ is bounded by
\begin{EQ}\label{eq:recurr:err2:b}
  \abs{\eta^{(n)}}
  \leq \norm{\bm{T}\bm{\varrho}^{(n)}}_{\infty}
  \leq \norm{\bm{T}\bm{\varrho}^{(n)}}_2
  = \norm{\bm{\varrho}^{(n)}}_2
  \leq \sqrt{N} \norm{\bm{\varrho}^{(n)}}_{\infty}
  \leq \delta\sqrt{N}
\end{EQ}
and thus, $\norm{\bm{f}_1}_\infty\leq \delta\sqrt{N}$
and $\norm{\bm{b}_n}_\infty\leq \delta\sqrt{N}$.
From~\eqref{eq:recurr:A} it follows that
\begin{EQ}\label{eq:F:recurr}
   \bm{f}_n =  \bm{A}_n \bm{A}_{n-1}\cdots  \bm{A}_2\bm{f}_1+
   \sum_{k=2}^n \bm{A}_n \bm{A}_{n-1}\cdots  \bm{A}_{n-k+1}\bm{b}_k.
\end{EQ}

The set $\Sigma=\{\bm{A}_i|i=1,\ldots,\infty\}$ is bounded and irreducible, each matrix has spectral radius strictly less than 1, (see Lemma~\ref{lem:boubd:z} for a proof), therefore the joint spectral radius is less than 1.
From~\eqref{eq:F:recurr}, with Theorem~\ref{teo:matrix:bound}
using the infinity norm,
\begin{EQ}[rcl]
   \abs{f^{(n)}} \leq
   \norm{\bm{f}_n}_\infty &\leq&  
   \norm{\bm{A}_n \bm{A}_{n-1}\cdots  \bm{A}_2}_\infty\norm{\bm{f}_1}_\infty+
   \sum_{k=2}^n \norm{\bm{A}_n \bm{A}_{n-1}\cdots\bm{A}_{n-k+1}}_\infty\norm{\bm{b}_k}_\infty, \\
   &\leq&
   C\varrho(\Sigma)^{n-2}\delta\sqrt{N}+
   C\sum_{k=2}^n \varrho(\Sigma)^k\delta\sqrt{N}
   \leq
   C\,\delta\,\sqrt{N}\,n,
\end{EQ}
and, because of $f^{(n)} = (\bm{T}\bm{e}^{(n)})_j$, it follows that $\norm{\bm{T}\bm{e}^{(n)}}_\infty\leq  C\,\delta\,\sqrt{N}\,n$.
A bound of the term 
$\bm{e}^{(n)}$ is done as
\begin{EQ}
  \norm{\bm{e}^{(n)}}_{\infty}\leq
  \norm{\bm{e}^{(n)}}_2 =
  \norm{\bm{T}\bm{e}^{(n)}}_2 \leq
  \sqrt{N}\norm{\bm{T}\bm{e}^{(n)}}_\infty
  \leq C\,\delta\,N\,n.
\end{EQ}
This shows that the error grows at most linearly.
\end{proof}
The above relation shows that the recurrence is at worst linearly unstable, i.e. the error grows at most linearly. The existence is proved in the works of Rota and Strang~\cite{Rota:1960} where the concept of joint spectral radius is introduced. The determination of $C$ is not possible but practise reveals that it is small. In conclusion it is possible to employ even a very high degree polynomial preconditioner with a stable computation.

\begin{lemma}\label{lem:boubd:z}
  Given $a'_n$, $b'_n$, $c'_n$ from \eqref{eq:preco:cheb} or ~\eqref{eq:abc}, 
  respectively for the \cheb{} and the Jacobi preconditioner,
  then the roots $z_1$ and $z_2$ of the 
  characteristic
  polynomial of homogeneous recurrence~\eqref{eq:recurr:err2}, i.e.
  \begin{EQ}\label{eq:stabpoly}
     z^2-(a'_n\lambda+b'_n)z-c'_n
  \end{EQ}
  satisfy $\abs{z_1}<1$ and $\abs{z_2}<1$ for all $n>0$
  and $0<\lambda\leq 1$.
\end{lemma}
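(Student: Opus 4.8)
The plan is to reduce the claim to an elementary two‑dimensional root‑location problem for the companion matrix in \eqref{eq:recurr:A}, and then to dispatch that problem by an explicit case analysis of the coefficient formulas \eqref{eq:preco:cheb} and \eqref{eq:abc}. First I would note that for a quadratic $z^2 - pz - q$ with real coefficients $p = a'_n\lambda + b'_n$ and $q = c'_n$, both roots lie strictly inside the unit disk if and only if the Jury/Schur--Cohn conditions hold, namely $|q| < 1$ together with $|p| < 1 - q$ (equivalently $|p| < 1 + c'_n$ and $|c'_n| < 1$, recalling the sign of $c'_n$). So the entire statement collapses to verifying these two scalar inequalities for every admissible $n > 0$ and every $\lambda \in (0,1]$, separately in the Chebyshev case and the Jacobi case.

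For the Jacobi case I would use \eqref{eq:abc}: there $c'_n = -1 + \Delta$ with $\Delta = \frac{2(3n^2+6n+2)}{(2n+1)(n+2)^2} \in (0,1)$ for $n \geq 1$, so $|c'_n| = 1 - \Delta < 1$ immediately, and $1 + c'_n = \Delta > 0$. The remaining job is the bound $|a'_n\lambda + b'_n| < \Delta$. Here $b'_n = 2 - \Delta$ and $a'_n = -4 + \frac{2(3n+5)}{(n+2)^2}$, which is negative for all $n \geq 1$; since $\lambda$ ranges over $(0,1]$ the quantity $a'_n\lambda + b'_n$ is monotone in $\lambda$, so it suffices to check the two endpoints $\lambda \to 0^+$ (giving $b'_n = 2-\Delta$, whose absolute value $2-\Delta$ is \emph{not} $<\Delta$ unless I have miscomputed a sign — so in fact the relevant endpoint analysis must be done carefully, presumably the intended normalization makes $a'_n\lambda+b'_n$ land in $(-\Delta,\Delta)$ only after accounting for the true sign of $b'_n$, and the $\lambda=1$ endpoint $a'_n + b'_n$ is the binding one). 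I would carry out this endpoint comparison explicitly as a rational inequality in $n$ and reduce it to a polynomial inequality that is then checked to hold for all $n \geq 1$ (with the small cases $n=1,2$ handled by hand using $a'_1,b'_1$ from \eqref{eq:abc}, and the limiting behaviour $\Delta \to 0$, $a'_n \to -4$, $b'_n \to 2$ confirming asymptotic strict satisfaction).

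For the Chebyshev case I would proceed analogously with \eqref{eq:preco:cheb}: $c'_n = -\gamma_n\gamma_{n-1}$, $a'_n = \frac{4\gamma_n}{1-\epsilon}$, $b'_n = -2\gamma_n\frac{1+\epsilon}{1-\epsilon}$, where $\gamma_n = \frac{c^n+c^{-n}}{c^{n+1}+c^{-(n+1)}}$ with $c = \frac{\sqrt\epsilon - 1}{\sqrt\epsilon+1} \in (-1,0)$. The key elementary facts are that $0 < \gamma_n < \frac{1}{|c|^{-1}+|c|}\cdot(\text{something} < 1)$ — more precisely one shows $0 < \gamma_n\gamma_{n-1} < 1$ directly from the closed form, giving $|c'_n| < 1$ and $1 + c'_n > 0$ — and that $a'_n\lambda + b'_n$, being affine in $\lambda$ with $a'_n > 0$, is again monotone, so only $\lambda = \epsilon$ and $\lambda = 1$ need checking; but since the polynomial $T_{m+1}^\epsilon$ was constructed precisely so that $1 - \lambda s_m(\lambda)$ stays in $[-\delta,\delta]$ on $[\epsilon,1]$, the root‑location inequality is essentially equivalent to that boundedness and I would make this link explicit rather than recomputing from scratch. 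The main obstacle I anticipate is purely bookkeeping: tracking the signs of $b'_n$ and $c'_n$ correctly through the substitution $c_n = -1$ and the $\gamma_n$ recurrence, and then reducing the two $\lambda$‑endpoint inequalities to polynomial inequalities in $n$ (and in $\epsilon$) that can be certified for the full ranges $n \geq 1$, $\epsilon \in (0,1)$ — there is no conceptual difficulty, only the risk of a sign slip, so I would organize the proof as: (i) state the Jury conditions; (ii) verify $|c'_n|<1$ and $1+c'_n>0$ from the closed forms; (iii) verify $|a'_n\lambda+b'_n| < 1+c'_n$ at $\lambda$‑endpoints by reduction to polynomial inequalities; (iv) conclude.
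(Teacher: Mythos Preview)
Your overall strategy---reducing the root--location claim to the Jury/Schur--Cohn inequalities for the quadratic $z^{2}-(a'_n\lambda+b'_n)z-c'_n$---is sound and is genuinely different from the paper's proof, which instead splits into the cases of complex--conjugate versus real roots and then bounds the explicit quadratic--formula expressions directly, with a rather laborious discriminant analysis in both the Jacobi and the Chebyshev setting. The Jury route is cleaner once the two scalar inequalities are written down correctly, because it avoids the quadratic formula entirely.

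However, your execution has a concrete sign error that you noticed but did not resolve, and it propagates. For $z^{2}+c_{1}z+c_{0}$ the Jury conditions are $|c_{0}|<1$ and $|c_{1}|<1+c_{0}$; here $c_{0}=-c'_n$ (the constant term of \eqref{eq:stabpoly} is $-c'_n$, not $c'_n$), so the correct second condition is $|a'_n\lambda+b'_n|<1-c'_n$, not $1+c'_n$. Since $c'_n\in(-1,0)$ in both cases one has $1-c'_n\in(1,2)$, and the key identity that dissolves your $\lambda\to 0^{+}$ puzzle is $b'_n=1-c'_n$ \emph{exactly}: in the Jacobi case $b'_n=2-\Delta$ and $1-c'_n=1-(-1+\Delta)=2-\Delta$; in the Chebyshev case $b'_n=\omega_{n}\omega_{1}/\omega_{n+1}=(\omega_{n-1}+\omega_{n+1})/\omega_{n+1}=1-c'_n$. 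Thus at $\lambda=0$ the Jury inequality is an equality (and indeed $z=1$ is a root, as the paper also finds). Moreover in \emph{both} cases $a'_n<0$---your claim that $a'_n>0$ for Chebyshev is wrong, because $\gamma_n=\omega_n/\omega_{n+1}<0$ when $c\in(-1,0)$---so $a'_n\lambda+b'_n$ is strictly decreasing in $\lambda$ and the upper Jury inequality becomes strict for every $\lambda\in(0,1]$. Only the lower inequality $a'_n\lambda+b'_n>-(1-c'_n)$ remains, to be checked at $\lambda=1$; using $b'_n=1-c'_n$ this is $a'_n+2b'_n>0$, which reduces to $2(n+1)>0$ in the Jacobi case and to $\frac{\omega_n}{\omega_{n+1}}(\omega_1+2)>0$ in the Chebyshev case (both factors negative). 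One further caution: your suggestion to tie the Chebyshev case to the bound $|1-\lambda s_m(\lambda)|\le\delta$ does not apply, since that bound holds only on $[\epsilon,1]$ while the lemma demands $\lambda\in(0,1]$; the corrected Jury argument covers $(0,\epsilon)$ as well, with the relevant endpoints being $0^{+}$ and $1$, not $\epsilon$ and $1$.
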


\begin{figure}[!tcb]
  \begin{center}
    \begin{tabular}{ll}
    \includegraphics[scale=0.75]{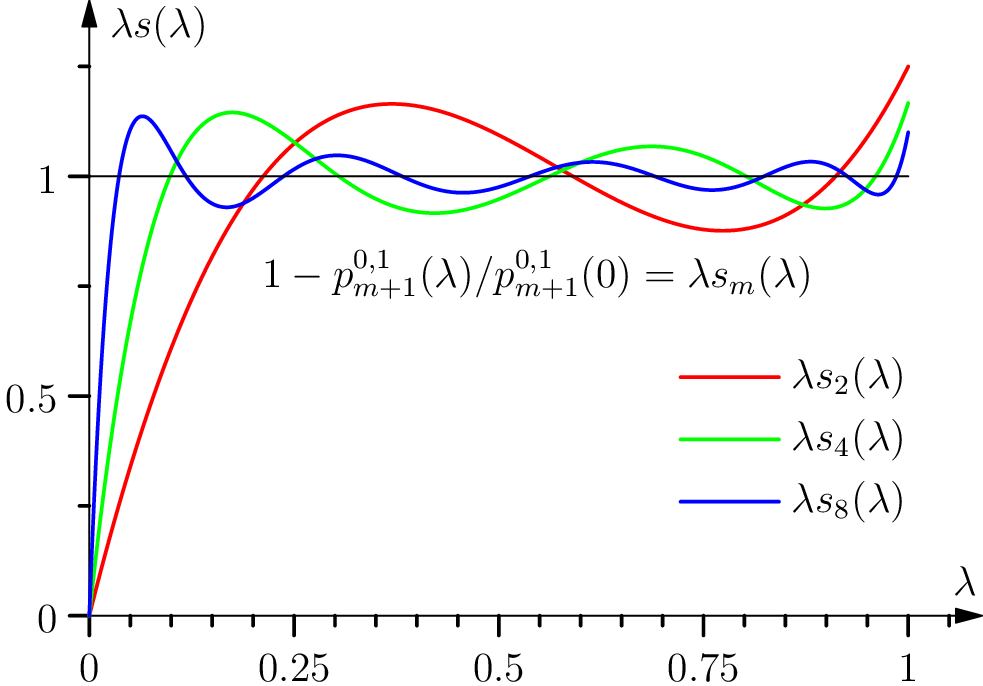} &
    \includegraphics[scale=0.75]{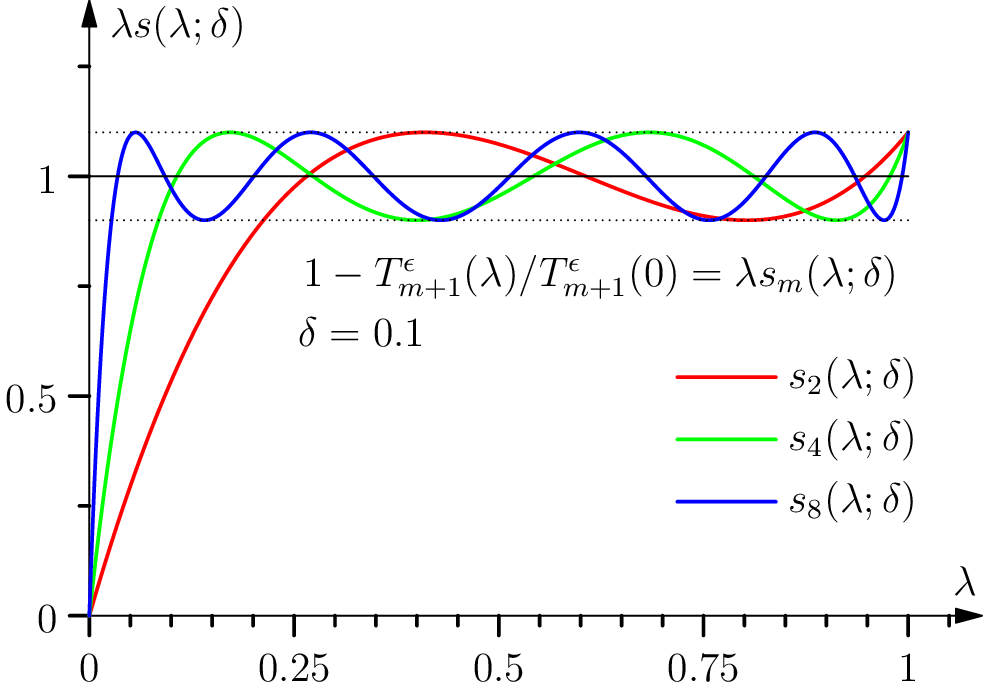}
    \\[1em]
    \includegraphics[scale=0.75]{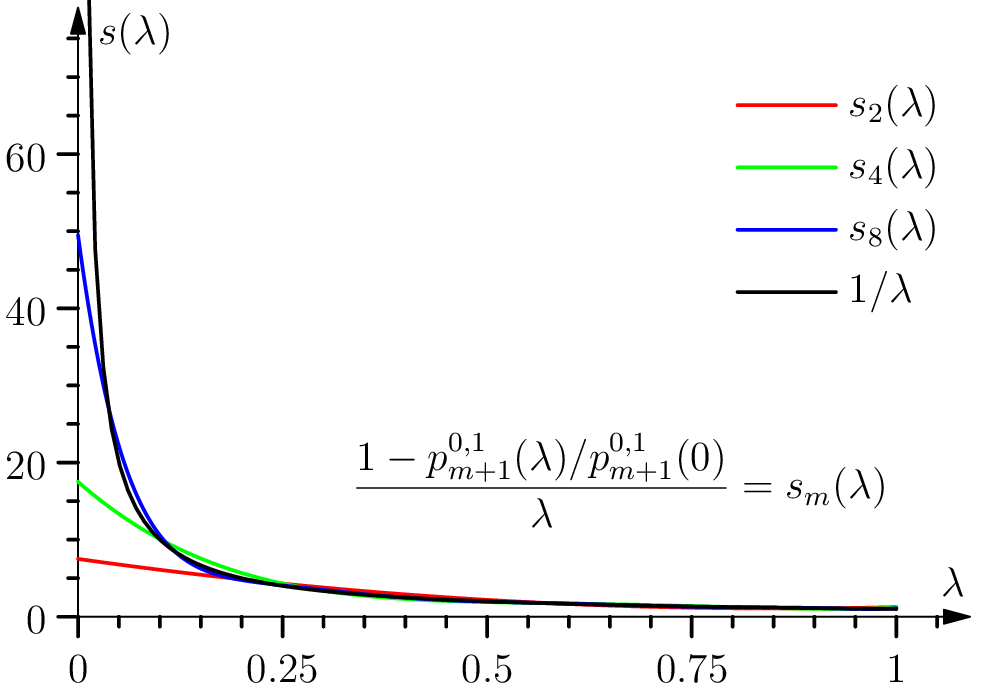} &
    \includegraphics[scale=0.75]{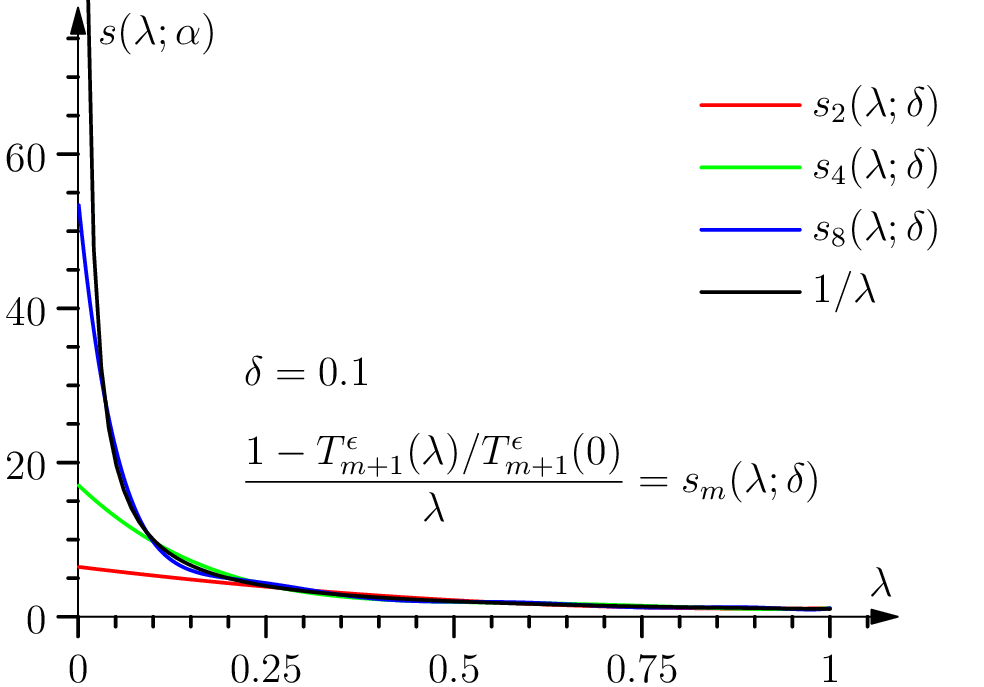}
    \\[1em]
    \includegraphics[scale=0.75]{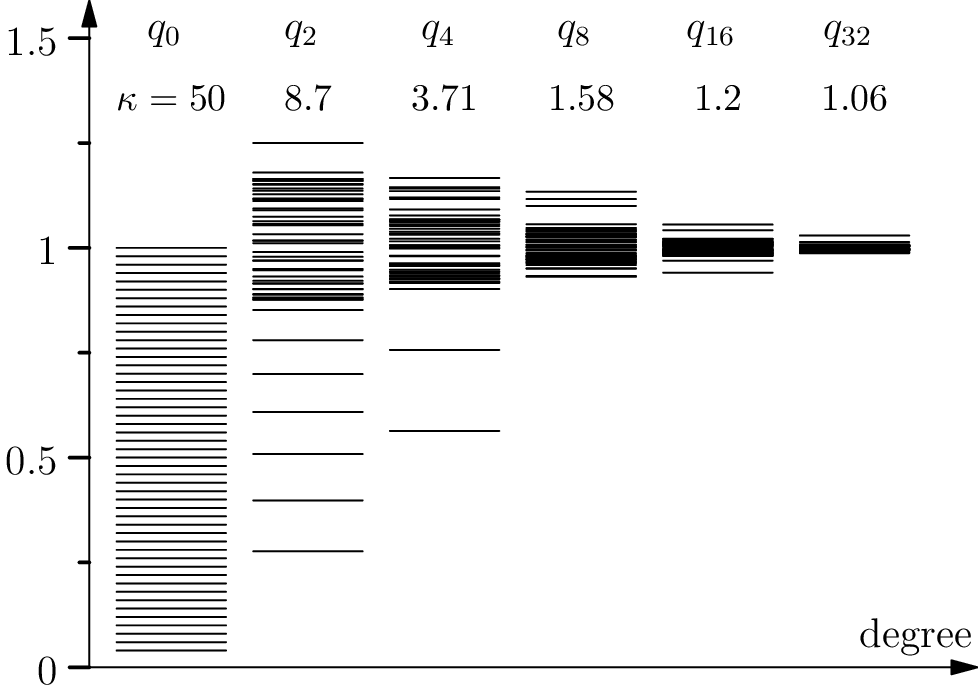} &
    \includegraphics[scale=0.75]{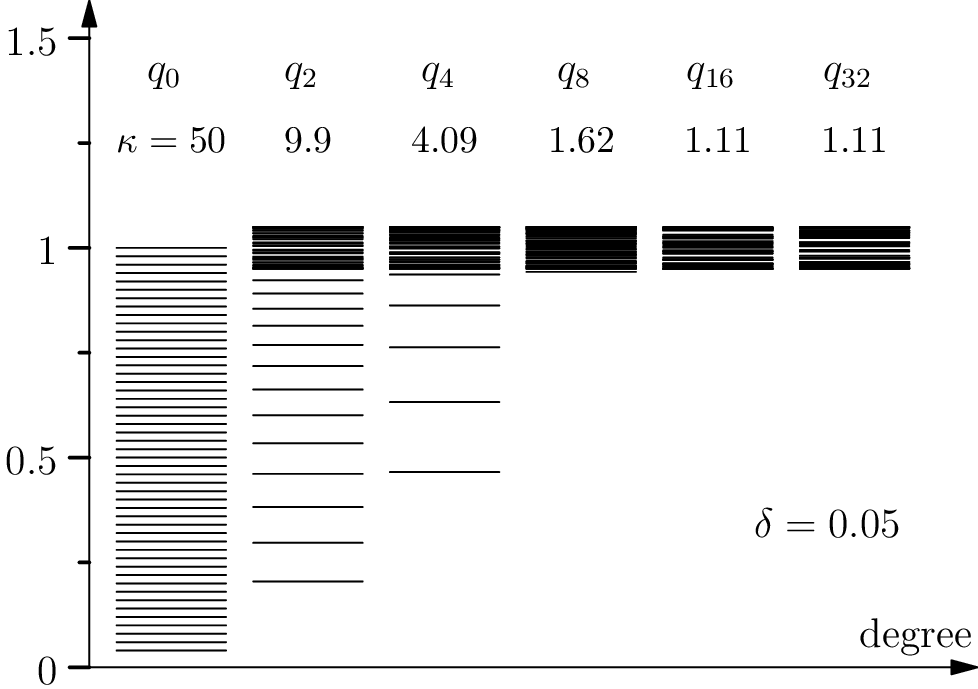}
    \end{tabular}
  \end{center}
  \caption{The polynomials described in Lemma \eqref{lem:preco:pre}: in the first row the product $\lambda s_m(\lambda)$ to show the approximation of the identity; in the second row the explicit graph of the polynomial preconditioner compared with the function $1/\lambda$; in the third row the performance of the preconditioner in terms of the degree, condition number and concentration of the eigenvalues of the coefficient matrix around 1. The left column represents the Jacobi weight, the right column the 
\cheb{} polynomials. }\label{fig:preco:eigs}
\end{figure}

\begin{proof}
Consider first the coefficients for the Jacobi polynomials defined in ~\eqref{eq:abc}. If the roots are complex, then they must be conjugate, thus $z_1=z$ and $z_2=\bar{z}$,
because the coefficients of the polynomial are real.  In that case,
the constant term of the polynomial is equal to the square of the modulus of the roots, 
$z\bar{z}=|z|^2=-c'_n$, thus it is easy to see that $|z|<1$ for all $n>0$.
Suppose now that the two roots $z_1$ and $z_2$ are real, multiplying the characteristic
polynomial by $(n+2)^2(2n+1)$, yields,  after some manipulation:
\begin{EQ}\label{eq:roots}
   z_1=\dfrac{A-B\lambda+ \sqrt{B^2\lambda^2-2AB\lambda+C}}{D}, \qquad 
   z_2=\dfrac{A-B\lambda- \sqrt{B^2\lambda^2-2AB\lambda+C}}{D}
\end{EQ}
for
\begin{EQ}[rclrcl]
   A &=& 2\,n^3+6\,n^2+6\,n+2,\qquad &
   B &=& 4\,n^3+12\,n^2+11\,n+3, \\
   C &=& (3\,n^2+6\,n+2)^2,\qquad &
   D &=& 2\,n^3+9\,n^2+12\,n+4,
\end{EQ}
with $A,B,C$ and $D$ strictly positive for all $n\geq0$. 
The discriminant $\Delta(\lambda)$ of the equation is $\Delta(\lambda)=B^2\lambda^2-2AB\lambda+C$
and represents a convex parabola because $B^2>0$. Its minimum is obtained for $\lambda=A/B\in(0,1)$, 
which gives $\Delta(A/B)<0$ and so complex roots, but this case was already considered. Hence we can set $\Delta_{\min}=0$.
The maximum of $\Delta(\lambda)$ is achieved at one of the extrema of the interval of definition of $\lambda$.
A quick calculation shows that $\Delta(\lambda)$ is maximum for $\lambda=0$, yielding a value of 
$\Delta_{\max}=C$. Using $\Delta_{\max}$ and $\Delta_{\min}$ it is possible to bound the roots $z_1$ and $z_2$:
\begin{EQ}[rclcl]
z_1 &<& \dfrac{A+\sqrt{\Delta_{\max}}}{D}   &=&  \dfrac{2n^3+6n^2+6n+2+(3n^2+6n+2)}{2n^3+9n^2+12n+4}=1,\\
z_1 &\geq& \dfrac{A-B+\sqrt{\Delta_{\min}}}{D} &=& -\dfrac{2n^3+6n^2+5n+1}{2n^3+9n^2+12n+4}>-1,\\
z_2 &<& \dfrac{A-\sqrt{\Delta_{\min}}}{D}   &=&  \dfrac{2n^3+6n^2+6n+2}{2n^3+9n^2+12n+4}<1,\\
z_2 &\geq& \dfrac{A-B-\sqrt{\Delta_{\max}}}{D} &=& -\dfrac{2n^3+9n^2+11n+3}{2n^3+9n^2+12n+4}>-1.
\end{EQ}
The previous inequalities prove the lemma for the Jacobi preconditioner.
Now consider the case of the coefficients of the \cheb{} polynomials defined in \eqref{eq:preco:cheb}. Recall the expression for $ c = (\sqrt{\epsilon}-1)/(\sqrt{\epsilon}+1)$, and notice that, for $\epsilon\in (0,1)$, $c$ is bounded in $-1<c<0$, 
so that
\begin{EQ}
  \omega_n := c^n+c^{-n}
\end{EQ}
is positive for even $n$ and negative for odd $n$, moreover, $\abs{\omega_n}=\abs{c^n+c^{-n}}\geq 2$
is monotone increasing for $n=1,2,3,\ldots$. In the case of complex roots, it was already shown that $z\bar{z}=|z|^2=-c'_n$, with 
\begin{EQ}
0\leq -c'_n=\dfrac{\omega_{n-1}}{\omega_{n+1}}< 1,\qquad \epsilon\in(0,1).
\end{EQ}
In the rest of the proof it is useful to consider also the ratio $-1< \frac{\omega_n}{\omega_{n+1}}< 0$,
for $c\in (-1,0)$, that corresponds to $\epsilon\in (0,1)$. The coefficients of equation \eqref{eq:preco:cheb}, 
observing that $\epsilon=(\omega_1+2)/(\omega_1-2)$,
are simplified in
\begin{EQ}
a'_n=\dfrac{4\omega_n}{(1-\epsilon)\omega_{n+1}}=
-\dfrac{\omega_n}{\omega_{n+1}}(\omega_1-2), 
\qquad b'_n=\dfrac{-2(1+\epsilon)\omega_n}{(1-\epsilon)\omega_{n+1}}
=\dfrac{\omega_n\omega_1}{\omega_{n+1}}, 
\qquad c'_n=-\dfrac{\omega_{n-1}}{\omega_{n+1}}.
\end{EQ}
Polynomial \eqref{eq:stabpoly} is rewritten as
\begin{EQ}\label{eq:poly:z}
  z^2+\dfrac{\omega_n}{\omega_{n+1}}
  (\lambda (\omega_1-2)-\omega_1)z
  +\dfrac{\omega_{n-1}}{\omega_{n+1}}
\end{EQ}
and its roots are (using $\omega_1\omega_n=\omega_{n-1}+\omega_{n+1}$)
\begin{EQ}
  z_{1,2}(\lambda)=
  \dfrac{\omega_1\omega_n}{2\omega_{n+1}}
  -\dfrac{\omega_n}{\omega_{n+1}}
   \dfrac{\omega_1-2}{2}
  \left[
  \lambda\pm\sqrt{\Delta(\lambda)}
  \right]
   ,
   \qquad
   \Delta(\lambda)=\lambda^2-\dfrac{2\omega_1}{\omega_1-2}\lambda+
   \dfrac{(\omega_{n-1}-\omega_{n+1})^2}
         {\omega_n^2(\omega_1-2)^2}.
\end{EQ}
Looking at the discriminant $\Delta(\lambda)$, the minimum of the associated convex parabola is for $\lambda=\frac{\omega_1}{\omega_1-2}\in (1/2,1)$. The corresponding value is 
\begin{EQ}
\Delta\left(\frac{\omega_1}{\omega_1-2}\right)=-4\dfrac{\omega_{n-1}\omega_{n+1}}{\omega_n^2(\omega_1-2)^2} <0.
\end{EQ}
The value at the right extremum is also negative:
\begin{EQ}
\Delta(1)=4\dfrac{\omega_n^2-\omega_{n-1}\omega_{n+1}}{\omega_n^2(\omega_1-2)^2} <0,
\end{EQ}
in facts 
\begin{EQ}
  \omega_n^2-\omega_{n-1}\omega_{n+1}
  =2-c^{-2}-c^2
  =-\dfrac{(c-1)^2(c+1)^2}{c^2}<0.
\end{EQ}
For $\lambda=0$, with some manipulations,
the roots of~\eqref{eq:poly:z} are
\begin{EQ}
   z_1(0)=1,\qquad z_2(0)=\dfrac{\omega_{n-1}}{\omega_{n+1}}<1.
\end{EQ}
Moreover, $\Delta(\epsilon)=\Delta(1)$, thus 
there exists $0<\lambda^{\star}<\epsilon$
such that $\Delta(\lambda^{\star})=0$.
Thus for $\lambda\in[\lambda^{\star},1]$
the roots are complex conjugate and with
modulus less than $1$.
For $\lambda\in[0,\lambda^{\star}]$, 
where the roots $z_{1,2}(\lambda)$ are real, $z_1(\lambda)$
and its derivative satisfy

\begin{EQ}
  z_{1}(\lambda)=
  \dfrac{\omega_1\omega_n}{2\omega_{n+1}}
  +\dfrac{\omega_n}{\omega_{n+1}}
   \dfrac{\omega_1-2}{2}
  \left[\sqrt{\Delta(\lambda)}-\lambda
  \right]
   , \\
  z'_{1}(\lambda)=
  \dfrac{\omega_n}{\omega_{n+1}}
   \dfrac{\omega_1-2}{2}
  \left[\dfrac{\Delta'(\lambda)}{\sqrt{\Delta(\lambda)}}-1
  \right]
   ,
   \qquad
   \Delta'(\lambda)=2\lambda-\dfrac{2\omega_1}{\omega_1-2},
\end{EQ}
and thus for $\lambda=0$ we have $z'_1(0)<0$.
Hence in a neighbourhood of $\lambda=0$, 
 $-1<z_{1,2}(\lambda)<1$,
and for $\lambda\in(0,\lambda^{\star}$ there are no roots equal to 1 or 0, thus the roots of \eqref{eq:poly:z} are bounded in $(0,1)$.
In facts, by contradiction, let $z=1$ be a root, then by~\eqref{eq:poly:z}
\begin{EQ}
  1+\dfrac{\omega_n}{\omega_{n+1}}
  (\lambda (\omega_1-2)-\omega_1)
  +\dfrac{\omega_{n-1}}{\omega_{n+1}}=0,
  \qquad\Rightarrow\qquad
  \lambda = 0.
\end{EQ}
Moreover $z=0$ is never a root of~\eqref{eq:poly:z}.

Thus the roots are bounded in the interval $(-1,1)$ for $n\geq 0$ and $\lambda \in (0,1]$.
\end{proof}

\section{Numerical tests}
In this section a group of tests is proposed for the solution of a
complex linear system of the form~\eqref{eq:sys:cplx}, i.e.
$(\bm{B}+\imag\bm{C})(\bm{y}+\imag\bm{z}) = \bm{c}+\imag\bm{d}$,
where $\bm{B}$ and $\bm{C}$ are semi-SPD with $\bm{B}+\bm{C}$ SPD.
The solvers used are COCG
(Algorithm~\ref{algo:COCG}) and COCR (Algorithm~\ref{algo:COCR})
preconditioned with 
\begin{itemize}
   \item ILU0, the incomplete \cho{} ILU(0),
         for the matrix $\bm{B}+\imag\bm{C}$;

  \item MHSS-ILU0, the incomplete \cho{} ILU(0) for the 
        preconditioner $\bm{P}$ defined in \eqref{eq:PQ};
  \item MHSS-JACOBI, the approximation of the 
        preconditioner $\bm{P}$ defined in \eqref{eq:PQ}
        with Jacobi polynomial preconditioner;
  \item MHSS-CHEB, the approximation of the 
        preconditioner $\bm{P}$ defined in \eqref{eq:PQ}
        with \cheb{} polynomial preconditioner.
\end{itemize}
The degrees used for the polynomial preconditioner are $10$, $50$, $100$, $500$ and $1000$.
Due to the lack  of complex symmetric matrices with
SPD real and imaginary part, it was decided to combine two real SPD matrices of not too far dimension (eventually padding with zeros to match the size of the biggest one). 
The real SPD matrices used are summarized in the next Table and can be found on
the NIST ``Matrix Market'' Sparse Matrix Collection~\cite{Boisvert:1997} 
or on University of Florida Sparse Matrix Collection~\cite{Davis:2011}. As usual \vv NNZ'' means number of non zero elements and it is understood that the matrices are square, hence only the number of rows is reported.

\begin{center}
\small
\begin{tabular}{||l|r|r||l|r|r||}
\hline\hline
 Name & N. rows & NNZ &
 Name & N. rows & NNZ \\
\hline
s1rmq4m1       &    $5\,489$ &   $262\,411$    & apache1        &   $80\,800$ &   $542\,184$   \\
s2rmq4m1       &    $5\,489$ &   $263\,351$    & denormal       &   $89\,400$ &   $726\,674$   \\
Pres\_Poisson  &   $14\,822$ &   $715\,804$    & G2\_circuit    &  $150\,102$ &   $726\,674$   \\
Dubcova1       &   $16\,129$ &   $253\,009$    & pwtk           &  $217\,918$ & $11\,524\,432$ \\
nasasrb        &   $54\,870$ &  $2\,677\,324$  & parabolic\_fem &  $525\,825$ &  $3\,674\,625$ \\
\hline\hline
\end{tabular}
\end{center}

These matrices are used paired where the first matrix of the pair
corresponds to the real part, the second to the imaginary part.
If the dimensions disagree, the smallest matrix is padded with zero
rows and columns up to the size of the biggest one.
The pairing of the matrices with the name of the corresponding test 
is resumed in the following Table:
\begin{center}
\begin{tabular}{||l|l|r|r||}
\hline\hline
 Test Name & Matrix pairing &  N. rows & NNZ \\
\hline
T5k    & (s1rmq4m1,      s2rmq4m1)       &   5\,489   &     265\,147 \\
T16k   & (Pres\_Poisson, Dubcova1)       &  16\,129   &     925\,819 \\
T80k   & (apache1,       nasasrb)        &  80\,800   &  3\,072\,500 \\
T150k  & (G2\_circuit,   denormal)       & 150\,102   &  1\,616\,970 \\
T500k  & (pwtk,   parabolic\_fem)        & 525\,825   & 14\,810\,591 \\
\hline\hline
\end{tabular}
\end{center}

The right hand side used for all tests, unless explicitly written, is assumed to be $(1+\imag)\mathbbm{1}$, that is $1+\imag$ for all components. A test from a real application is found in \cite{Codecasa:2009}, from which the complex symmetric SPD matrices are provided with a specific right hand side.

\begin{center}
\begin{tabular}{||l|l|r|r||}
\hline\hline
 Test Name & Matrix pairing &  N. rows & NNZ \\
\hline
S13k   & eddy\_6k\_gauged   &  $13\,067$  &     $295\,571$  \\
S32k   & eddy\_16k\_gauged  &  $31\,853$  &     $720\,689$  \\
S500k  & eddy\_265k\_gauged & $538\,709$  & $11\,690\,125$  \\
\hline\hline
\end{tabular}
\end{center}

List of the tests: for the first group (T tests) the r.h.s. was $(1+\imag)\mathbbm{1}$, for the 
second group (S tests) the r.h.s. was the one prescribed in the paper \cite{Codecasa:2009}.
}

The linear system corresponding to each test is solved with COCG and COCR. The preconditioners used are the ILU(0) for the complex matrix $\bm{B} + \imag\bm{C}$, the proposed preconditioners
$(1+\imag)(\bm{B}+\bm{C})$ approximated with ILU(0) or Jacobi and \cheb{} polyonomials 
of degree $10$, $50$, $100$, $500$, $1000$. The results in terms of number of iterations are collected in Table \ref{tab:cocg} for COCG and COCR.

\begin{table}[!tcb]\label{tab:cocg}
\begin{center}
\caption{Numerical results for COCG and COCR, the reported numbers represents the iterations of the corresponding solver with the specified preconditioner. The dash indicates that it was not feasible to compute a particular test, while the letters \vv NC'' mean \vv not converged'', i.e., residual is still large after $5\, 000$ iterations. The value of $\delta$ used in the \cheb{} preconditioner was $0.2$ while the stopping tolerance was $10^{-8}$. The time elapsed in the computation is expressed in seconds.}
\smallskip
\begin{tabular}{||l|r|r|rrrrr|rrrrr||}
\hline
\multicolumn{13}{||c||}{COCG}\\
\hline
\multicolumn{1}{||c|}{} &
\multicolumn{1}{c|}{} &
\multicolumn{1}{c|}{MHSS} &
\multicolumn{5}{c|}{degree of MHSS-JACOBI} &
\multicolumn{5}{c||}{degree of MHSS-CHEB} \\
 & ILU0 & \multicolumn{1}{c|}{ILU0} & 10 & 50 & 100 & 500 & 1000 &
             10 & 50 & 100 & 500 & 1000 \\
\hline
T5k   &  86 & 148   &  127 &   32 &   26 &  25 &  25     &  111 &   32 &   32 &   31 &  31 \\
time  & 0.2 & 0.3   & 0.9  &  1.0 & 1.6  & 7.4 & 15.0    &  0.8 &  1.0 &  1.9 &  9.2 &  18.4 \\ 
      \hline
T16k  & 65  &  78   &   31 &   24 &   24 &  23  &  23    &   31 &   29 &   28 &  29 &  30 \\
time  & 0.3 & 0.5   &  0.6 &  1.9 &  3.6 & 16.6 &  32.9  &  0.6 &  2.3 &  4.2 &  21.1 &  43.2 \\
      \hline
T80k  & NC    & NC   & 4765   &  657   &  298   &  55    &  31   & 3904 &  596   &  251  &  52 &  33 \\
time  &       &      &  275.6 &  162.8 &  145.4 & 130.4  & 146.4 &  225.8 & 148.0 &  121.6 & 123.8 & 156.6 \\
      \hline
T150k & 661  & 638   &  323 &   81 &   42 &  19   &  19   &  277 &   65 &   36 &   23 &  23 \\
time  & 12.7 &  11.0 & 14.2 & 15.1 & 15.5 &  33.9 &  67.4 & 12.9 & 12.4 & 13.3 & 41.1 &  81.9 \\
      \hline
T500k & NC  & NC    & 4928 &  829 &  479 &  91 &  47    & 4805 &  929 &  461 &  87 &  46 \\
time  &     &       & 1665 & 1215 & 1374 & 1378& 1393   & 1763 & 1333 & 1343 & 1246& 1380\\ 
\hline\hline
S13k  & --  & NC    & 4681 & 1123 &  591 &  80 &  38    & 4016 & 1199 &  613 &  93 &  57 \\
S32k  & --  & NC    & 4665 & 1102 &  686 & 106 &  49    & 4012 & 1156 &  612 & 102 &  64 \\
S500k & --  & --    &  NC  & 4328 & 2165 & 394 & 107    & NC   & 4179 & 2359 & 390 & 185 \\
\hline\hline
\multicolumn{13}{||c||}{COCR}\\
\hline
\multicolumn{1}{||c|}{} &
\multicolumn{1}{c|}{} &
\multicolumn{1}{c|}{MHSS} &
\multicolumn{5}{c|}{degree of MHSS-JACOBI} &
\multicolumn{5}{c||}{degree of MHSS-CHEB} \\
 & ILU0 & \multicolumn{1}{c|}{ILU0} & 10 & 50 & 100 & 500 & 1000 &
             10 & 50 & 100 & 500 & 1000 \\
\hline
T5k   & 86  & 148   &  124 &   32 &   26 &  25 &  25     &  109 &   32 &   32 &  32 &  32 \\
time  & 0.2 & 0.3   &  0.9 &  1.0 &  1.6 & 7.5 & 14.8    &  0.8 &  1.0 & 1.9  & 9.5 & 18.1 \\
      \hline
T16k  & 65  &  75   &   31 &   24 &   23 &  23  &  23    &   31 &  29 &   28 &  29 &  30 \\
time  & 0.4 & 0.5   &  0.6 &  1.9 &  3.6 & 16.6 &  32.9  &  0.6 & 2.2 & 4.2 &  21.0 &  43.2 \\
      \hline
T80k  & NC  & NC & 3591  &  533   &  226   &  52   &  32   & 2947  &  413   & 223    &  49 &  34 \\
time  &     &    & 209.5 &  132.1 &  109.3 & 123.4 & 151.1 & 172.0 &  102.6 & 107.7 & 116.1 & 160.2 \\
      \hline 
T150k & 634   & 634     &  303   &   79   &   42   &  19   &  19    &  267 &   62 &   36 &  23 &  23 \\
time  &  13.9 &  11.0   &   14.2 &   15.1 &   15.5 &  34.0 &  67.4    &   13.0 &   12.4 &   13.3 &  41.1 &  81.9 \\
      \hline
T500k & NC  & NC    & 4066 &  828 &  407 &  80 &  41    & 4067 &  807 &  397 &  88 &  46 \\
time  &     &       & 1410 & 1204 & 1182 & 1208& 1218   & 1471 & 1172 & 1174 & 1330& 1357\\
\hline\hline
S13k  & --  & NC    & 3683 &  896 &  481 &  64 &  38    & 3153 &  742 &  452 &  93 &  53 \\
S32k  & --  & NC    & 3904 &  918 &  487 &  76 &  43    & 3352 &  756 &  408 &  90 &  61 \\
S500k & --  & --    & 8504 & 2025 & 1017 & 191 &  75    & 7203 & 1634 &  856 & 216 & 102\\
\hline\hline
\end{tabular}
\end{center}
\end{table}
From Table \ref{tab:cocg} it is clear that the strategy presented in Algorithm~\ref{algo:2}
is effective. In fact when ILU factorization is available and iteration converges
incomplete factorization preconditioner is faster than polynomial preconditioner.
Polynomial preconditioner is an effective alternative when ILU is not available 
or not sufficient as preconditioner.

Computational time is indicative and was obtained implementing the proposed precoditioner
using MATLAB scripts available at Matlab Central. Standard MATLAB incomplete LU is used in the tests.

Rising the degree of the polynomial corresponds in lowering the number of iterations needed by COCG and COCR. 
It is also apparent that it is not possible to go below a certain number of iterations even with a 
very high degree polynomial, this is evident for example in test T16k with both COCG and COCR and with both preconditioners.

This is explained from the fact that the condition number of the preconditioned system when preconditioner
$(1+\imag)(\bm{B}+\bm{C})$ is computed exactly is independent of the system size.

Another behaviour that is common to all tests is the generally better performance of the COCR over the COCG: this can be appreciated looking at Figure~\ref{fig:res_T80k}, \ref{fig:res_T500k} and~\ref{fig:res_specogna}. They show the history of the residual for each iteration of both methods with the Jacobi and the \cheb{} preconditioners.

\section{Conclusions}\label{sec:conclusion}
It was presented a polynomial preconditioner for the solution of the linear system $\bm{A}\bm{x} = \bm{b}$, for $\bm{A}$ complex symmetric such that
$\bm{A}=\bm{B}+\imag\bm{C}$, where $\bm{B}$, $\bm{C}$ are real symmetric semi-positive definite 
matrices (semi-SPD) and $\bm{B}+\bm{C}$ is symmetric positive definite (SPD).
Typical problems of this form come from the field of electrodynamics, where the involved matrices are complex but not Hermitian and standard methods can not be used directly.
This algorithm is suitable for large matrices, where \cho{} decomposition, or its inexact form,
are too costly or infeasible. It works as a polynomial approximation of a single step of
the MHSS method, but it is successfully applied as preconditioner of Conjugate Gradient-like methods,
in particular it is showed how to use it together with COCG or COCR.
Following the trend of the last years, but aware of the criticism that arose in the '80s,
the proposed new preconditioner is computed as a recurrence of orthogonal polynomials and
is proved to be stable. This allows to employ polynomials of very high degree and numerical
tests confirm the expected theoretical good performances.

\section{Acknowledgemnts}\label{sec:ack}
The authors wish to thank prof. R.Specogna for providing the necessary assistance to the description of the problem of the Eddy Current and for the matrices that originated tests S13k, S32k and S500k. 

\section{References}
\bibliographystyle{siam}
\bibliography{Preconditioner-refs}

\begin{figure}[!tcb]
  \begin{center}
    \includegraphics[scale=0.7]{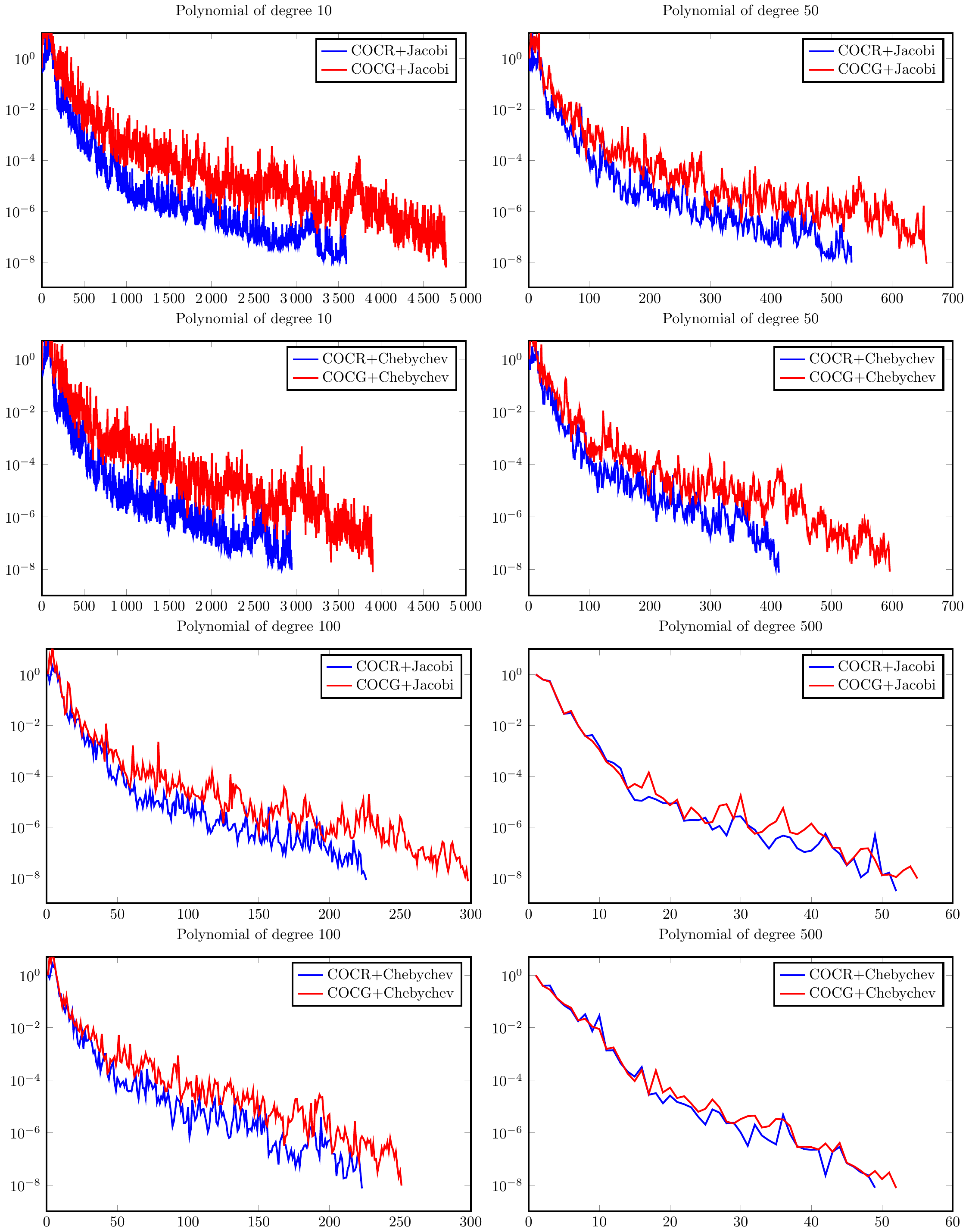}
  \end{center}
  \caption{The history of the residual for the test T80k with COCR and COCG preconditioned with Jacobi and \cheb{} polynomial.}\label{fig:res_T80k}
\end{figure}
\begin{figure}[!tcb]
  \begin{center}
    \includegraphics[scale=0.7]{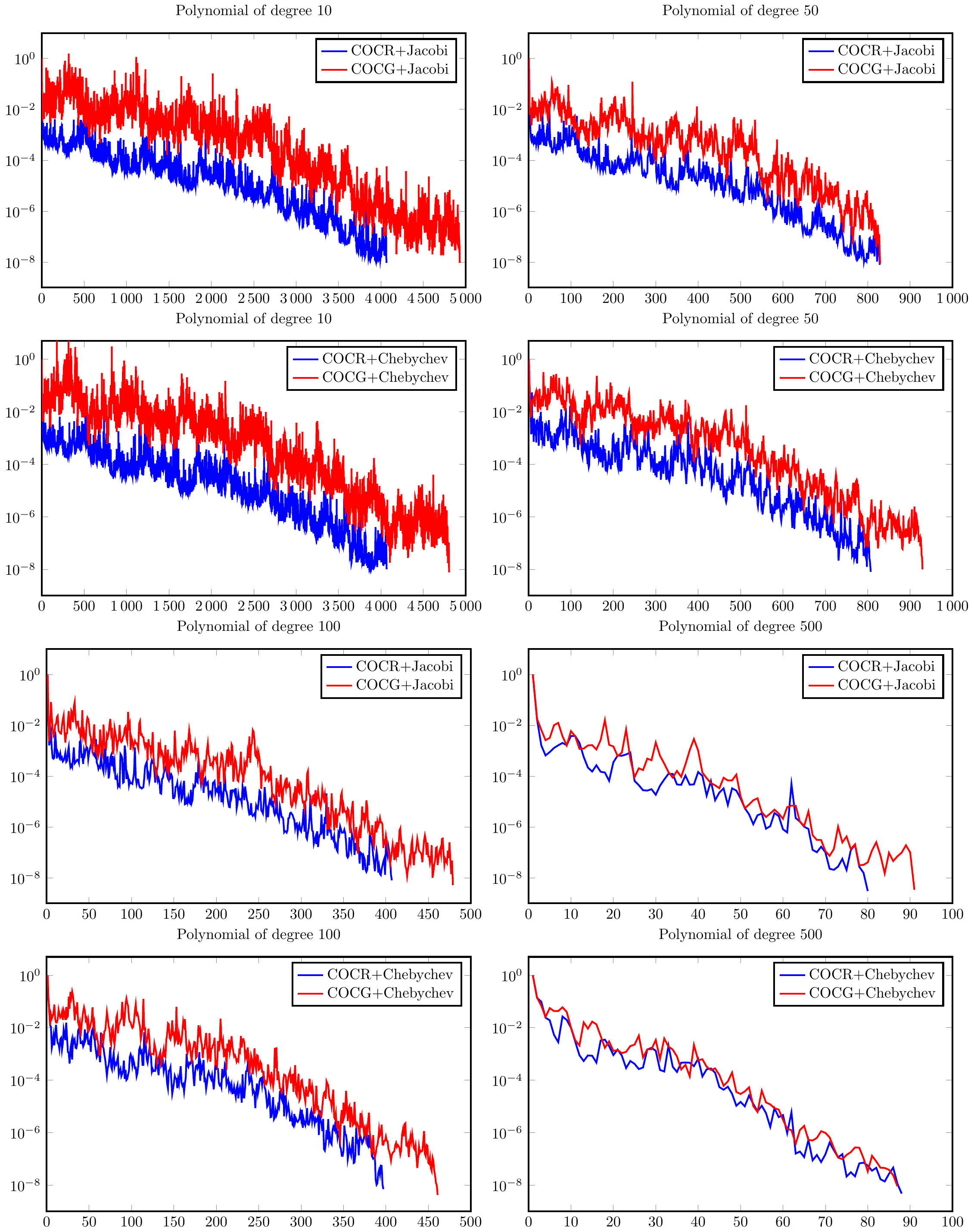}
  \end{center}
  \caption{The history of the residual for the test T500k with COCR and COCG preconditioned with Jacobi and \cheb{} polynomial.}\label{fig:res_T500k}
\end{figure}
\begin{figure}[!tcb]
  \begin{center}
    \includegraphics[scale=0.9]{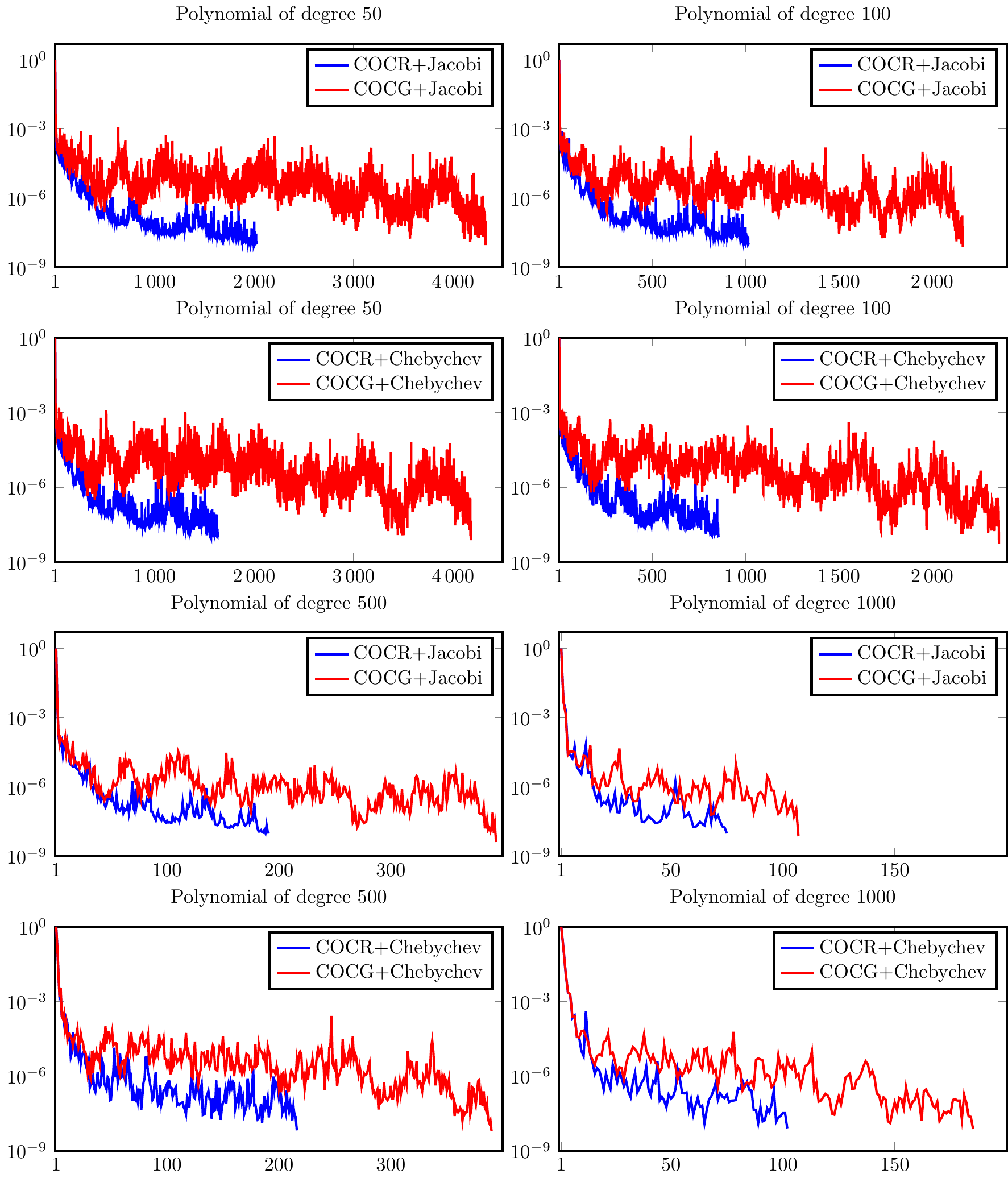}
  \end{center}
  \caption{The history of the residual for the test S500k with COCR and COCG preconditioned with Jacobi and \cheb{} polynomial.}\label{fig:res_specogna}
\end{figure}

\end{document}